\documentclass[reqno]{amsart}
\usepackage{amssymb,amsthm,amsfonts,amstext,amsmath}

\usepackage[a4paper,vmargin={2cm,2cm},hmargin={2cm,2cm},bottom=20mm]{geometry}
\usepackage{mathtools}
\usepackage{todonotes}

\usepackage{fontawesome5}
\usepackage{eso-pic}
\usepackage{charter}
\usepackage{listings}
\usepackage{color}
\usepackage{textcomp}
\usepackage[normalem]{ulem}

\definecolor{dkgreen}{rgb}{0,0.6,0}
\definecolor{gray}{rgb}{0.5,0.5,0.5}
\definecolor{mauve}{rgb}{0.58,0,0.82}

\usepackage{comment}
\usepackage{graphicx}
\usepackage{enumerate}
\numberwithin{equation}{section}
\usepackage{float}
\usepackage[colorlinks=true,linkcolor=blue,citecolor=magenta]{hyperref}
\usepackage{bookmark}

\makeatletter
\def\namedlabel#1#2{\begingroup
  #2\def\@currentlabel{#2}\phantomsection\label{#1}\endgroup
}
\makeatother

\usepackage{mathrsfs}
\usepackage{pythonhighlight}
\usepackage{tikz}
\usetikzlibrary{mindmap,overlay-beamer-styles}
\usetikzlibrary{shapes.arrows,calc,positioning}\usetikzlibrary{decorations.pathmorphing, backgrounds, shapes, positioning}

\tikzstyle{convolution}=[circle, draw=black, fill=white, minimum size=2pt, inner sep=0pt]
\tikzstyle{noise}=[circle, draw=black, fill=black, minimum size=2pt, inner sep=0pt]
\tikzstyle{smooth}=[circle, draw=blue, thick, fill=white, minimum size=2pt, inner sep=0pt]

\tikzstyle{kernel}=[-, very thick, draw=black]
\tikzstyle{difference}=[-, very thick, draw=red]
\tikzstyle{smooth_function}=[-, blue, decorate, decoration={zigzag, segment length=2pt, amplitude=1pt}]
\tikzstyle{contraction_edge}=[-, dashed]
\tikzstyle{contracted_edge_normal_kernel}=[-, draw=black, tikzit draw=black, very thick, decorate, decoration={segment length=2pt, amplitude=1pt}]
\tikzstyle{contracted_edge_diff}=[-, draw=red, very thick, tikzit draw=red, decorate, decoration={coil, segment length=3pt, amplitude=1pt}]
\tikzstyle{contracted_double_bilaplacian}=[-, draw={rgb,255: red,128; green,0; blue,128}, decorate, decoration={coil, segment length=3pt, amplitude=1pt}, very thick, tikzit draw={rgb,255: red,128; green,0; blue,128}]
\tikzstyle{doubledifference}=[-, draw={rgb,255: red,128; green,0; blue,128}, very thick]
 \usetikzlibrary{automata,arrows,patterns,shapes.geometric,shapes.misc,decorations.footprints}
\tikzset{>=stealth}
\usepackage{amsmath,amsfonts,amssymb}
 \usepackage{dsfont}
\usepackage{enumitem}

\newcommand{\Qb}{\overline Q}
\newcommand{\Qbb}{\overline{\overline Q}}

\usepackage[utf8]{inputenc}
\usepackage[T1]{fontenc}
\usetikzlibrary{backgrounds}
\usetikzlibrary{patterns,fadings}
\usetikzlibrary{arrows,decorations.pathmorphing}
\usetikzlibrary{calc}
\definecolor{light-gray}{gray}{0.95}

\usetikzlibrary{decorations.pathmorphing, backgrounds, shapes, positioning}

\tikzstyle{convolution}=[circle, draw=black, fill=white, minimum size=2pt, inner sep=0pt]
\tikzstyle{noise}=[circle, draw=black, fill=black, minimum size=2pt, inner sep=0pt]
\tikzstyle{smooth}=[circle, draw=blue, thick, fill=white, minimum size=2pt, inner sep=0pt]

\tikzstyle{kernel}=[-, very thick, draw=black]
\tikzstyle{difference}=[-, very thick, draw=red]
\tikzstyle{smooth_function}=[-, blue, decorate, decoration={zigzag, segment length=2pt, amplitude=1pt}]
\tikzstyle{contraction_edge}=[-, dashed]
\tikzstyle{contracted_edge_normal_kernel}=[-, draw=black, tikzit draw=black, very thick, decorate, decoration={segment length=2pt, amplitude=1pt}]
\tikzstyle{contracted_edge_diff}=[-, draw=red, very thick, tikzit draw=red, decorate, decoration={coil, segment length=3pt, amplitude=1pt}]
\tikzstyle{contracted_double_bilaplacian}=[-, draw={rgb,255: red,128; green,0; blue,128}, decorate, decoration={coil, segment length=3pt, amplitude=1pt}, very thick, tikzit draw={rgb,255: red,128; green,0; blue,128}]
\tikzstyle{doubledifference}=[-, draw={rgb,255: red,128; green,0; blue,128}, very thick]
 \usetikzlibrary{automata,arrows,patterns,shapes.geometric,shapes.misc,decorations.footprints}
\tikzset{>=stealth}
\usepackage{amsmath,amsfonts,amssymb}

\makeatletter
\newcommand\RSloop{\@ifnextchar\bgroup\RSloopa\RSloopb}
\makeatother
\newcommand\RSloopa[1]{\bgroup\RSloop#1\relax\egroup\RSloop}
\newcommand\RSloopb[1]{\ifx\relax#1\else
  \ifcsname RS:#1\endcsname
  \csname RS:#1\endcsname
  \else
  \GenericError{(RS)}{RS Error: operator #1 undefined}{}{}\fi
  \expandafter\RSloop
  \fi
}
\newcommand\X{0}

\newcommand\RS[1]{
  \begin{tikzpicture}[
      baseline=+0.2ex, every node/.style={circle,draw,fill,minimum size=1.5pt,inner
      sep=0pt,outer sep=0pt},
      line cap=round
    ]
    \ifcsname pgf@sh@ns@\X\endcsname
    \else
    \coordinate(\X) at (0,0);
    \fi
    \RSloop{#1}\relax
  \end{tikzpicture}
}

\newcommand\RSdef[1]{\expandafter\def\csname RS:#1\endcsname}
\newlength\RSu
\RSdef{i}{\draw[thick] (\X)  node[circle, draw, fill=white,
draw=black, minimum size=0pt, inner sep=0pt] {} -- +(90:\RSu) node{};}
\RSdef{l}{\draw[thick] (\X) node[circle, draw, fill=white,
draw=black, minimum size=2pt, inner sep=0pt] {} -- +(120:\RSu) node{};}
\RSdef{r}{\draw[thick] (\X) node[circle, draw, fill=white,
draw=black, minimum size=2pt, inner sep=0pt] {} -- +(60:\RSu) node{};}
\RSdef{I}{ \draw[thick] (\X) node[circle, fill=white, minimum
  size=2pt, inner sep=0pt] {} -- +(90:\RSu) coordinate(\X I)
node[circle, fill=white, minimum size=2pt, inner sep=0pt] {}; \edef\X{\X I} }

\RSdef{0}{\node[circle, draw=black, fill=black, minimum size=2pt,
inner sep=0pt] (\X) at (\X) {};}

\RSdef{Z}{\draw[blue, decorate, decoration={zigzag, segment
  length=2pt, amplitude=1pt}] (\X) node[circle, draw, fill=white,
  draw=blue,thick, minimum size=2pt, inner sep=0pt] {}--
+(90:\RSu*0.75) coordinate(\X I);\edef\X{\X I}}

\RSdef{L}{\draw[red, thick] (\X) node[circle, draw, fill=white, draw=black,
  minimum size=2pt, inner sep=0pt] {}
  -- +(120:\RSu) coordinate(\X red)
  node[circle, draw=black, fill=black, minimum size=2pt, inner sep=0pt] {};

\draw[red, thick] ($(\X) + (120:0.5\RSu)$) +(30:0.2\RSu) -- +(210:0.2\RSu);
}

\RSdef{D}{\draw[red, thick]
  (\X)
  node[circle, draw=black, fill=white, minimum size=0pt, inner sep=0pt] {}
  -- +(90:\RSu)
  coordinate(\X D)
  node[circle, draw=black, fill=black, minimum size=2pt, inner sep=0pt] {};

\draw[red, thick] ($(\X) + (90:0.5\RSu)$) +(-180:0.2\RSu) -- +(0:0.2\RSu);

\edef\X{\X D}}

\usepackage{wasysym}

\definecolor{myred}{RGB}{200,0,0}
\definecolor{myblack}{RGB}{40,40,40}
\definecolor{mypurple}{RGB}{200,40,200}

\newcommand{\PairRR}{\begin{tikzpicture}[baseline=-0.6ex,scale=.55]
\draw[myred,line width=.9pt] (0,0)--(.8,0);
\draw[myred,line width=.9pt] (.2,-.1)--(.2,.1);
\draw[myred,line width=.9pt] (.6,-.1)--(.6,.1);
\fill (.4,0) circle (1.0pt);
\end{tikzpicture}}

\newcommand{\PairBB}{\begin{tikzpicture}[baseline=-0.6ex,scale=.55]
\draw[myblack,line width=.9pt] (0,0)--(.8,0);
\fill (.4,0) circle (1.0pt);
\end{tikzpicture}}

\newcommand{\PairRB}{\begin{tikzpicture}[baseline=-0.6ex,scale=.55]
\draw[myred,line width=.9pt] (0,0)--(.4,0);
\draw[myred,line width=.9pt] (.2,-.1)--(.2,.1);
\draw[myblack,line width=.9pt] (.4,0)--(.8,0);
\fill (.4,0) circle (1.0pt);
\end{tikzpicture}}

\newcommand{\PairPurple}{\begin{tikzpicture}[baseline=-0.6ex,scale=.55]
\draw[mypurple,line width=.9pt] (0,0)--(.4,0);
\draw[mypurple,line width=.9pt] (.4,-.1)--(.4,.1);
\draw[mypurple,line width=.9pt] (.4,0)--(.8,0);
\fill (.4,0) circle (1.0pt);
\end{tikzpicture}}

\newtheorem{theorem}{Theorem}[section]
\newtheorem{lemma}[theorem]{Lemma}

\newtheorem{remark}[theorem]{Remark}
\newtheorem{definition}[theorem]{Definition}

\newcommand{\mc}[1]{{\mathcal #1}}
\newcommand{\mf}[1]{{\mathfrak #1}}

\newcommand{\bb}[1]{{\mathbb #1}}

\newcommand{\eps}{\varepsilon}

\newcommand{\p}{\partial}

\newcommand{\R}{\mathbb R}

\newcommand{\Z}{\mathbb Z}

\newcommand{\E}{\mathbb E}

\def\centerarc[#1](#2)(#3:#4:#5){\draw[#1]
($(#2)+({#5*cos(#3)},{#5*sin(#3)})$) arc (#3:#4:#5);}

\newcommand{\red}[1]{{\color{red}#1}}

\newcommand{\eqdef}{\stackrel{\scriptscriptstyle \text{def} }{=}}
\newcommand{\err}{(\mathfrak{e})}
\newcommand{\conv}{\red{\star}\,}

\def\II{\mathrm{I\kern-0.1emI}}

\let\oldtocsection=\tocsection
\let\oldtocsubsection=\tocsubsection
\let\oldtocsubsubsection=\tocsubsubsection
\renewcommand{\tocsection}[2]{\hspace{0em}\oldtocsection{#1}{#2}}
\renewcommand{\tocsubsection}[2]{\hspace{1em}\oldtocsubsection{#1}{#2}}
\renewcommand{\tocsubsubsection}[2]{\hspace{2em}\oldtocsubsubsection{#1}{#2}}
\DeclareRobustCommand{\SkipTocEntry}[5]{}

\keywords{Stochastic differential equations, second-order fields.}

\begin{document}

\title[Second-order fields for stochastic PDEs]{Second-order
fields for stochastic partial differential equations}

\author[E. Barros]{Eldon Barros}
\address{IMPA\\
  Estrada Dona Castorina, 110
  Jardim Botânico
  CEP 22460-320
Rio de Janeiro, Brazil}
\email{eldon.barros@impa.br}
\thanks{}

\author[L. Chiarini]{Leandro Chiarini}
\address{USP\\
  Instituto de Matem\'atica, Estat\'istica e Ci\^encia da Computação,
  Rua do Matão, 1010. CEP 05508-090\\
São Paulo, Brazil}
\email{lchiarini@usp.br}
\thanks{}

\author[M. Jara]{Milton Jara}
\address{IMPA\\
  Estrada Dona Castorina, 110
  Jardim Botânico
  CEP 22460-320
Rio de Janeiro, Brazil}
\email{mjara@impa.br}
\thanks{}

\subjclass[2010]{60H05,60F17,34K07}

\begin{abstract}
  In this article, we study the second-order fluctuation of the solutions of one-dimensional polynomial stochastic partial differential equations (SPDEs) of the form
\begin{equation*}
    (\partial_t - \Delta) \Phi_{\eps} = -P(\Phi_{\eps}) + \xi_{\eps},
  \end{equation*}
where $P$ is a polynomial of degree greater or equal to $2$, $\xi_\eps$ is the white-noise after being convoluted (in space) by the heat kernel $K_\eps = e^{\eps \Delta}$.
  More precisely, taking advantage of the local solutions of pointwise well-posedness of limits $\Phi= \lim_{\eps\to 0}\Phi_{\eps}$, we characterise the limit of $\Phi^{\err}=\lim_{\eps \to 0} \eps^{-1}(\Phi_{\eps}-\Phi)$ as the solution of a more irregular stochastic partial differential equation.
  This is performed by applying the Da Prato–Debussche decomposition in the non-linear equation, and characterising the limit of each of the terms.
  We also discuss  possible characterisations of second-order fluctuations performed for higher dimensional SPDEs in the weakly-coupled regime.
\end{abstract}

\maketitle

\allowdisplaybreaks

\section{Introduction}\label{introduction}

In~\cite{chiarini2023fractional}, the authors explored second-order terms in the expansion of approximations for discrete random fields arising from linear stochastic partial differential equations (SPDEs).
Consider the torus $\mathbb{T}=[-1/2,1/2)^d$, and its discrete counterpart $\mathbb{T}^d_n=(\mathbb{Z}^d/n) \cap \mathbb{T}^d$.
If we set $\mathcal{L}_n$ to be the generator of a (symmetric) long-range random walk in the torus and set $\mathcal{L}$ to be the generator whose domain of attraction contains the law of such walk.
Then, under natural couplings between the white-noise $\xi$ in $\mathbb{T}^d$ and the  discrete white-noise $\xi_n$ in $\mathbb{T}^d_n$, one can easily show that the solution of
\begin{equation}\label{eq:intro-discrete-linear}
  \begin{cases}
    \mathcal{L}_n \Xi_n = \xi_n \\
    \frac{1}{n^d}\sum_{x \in \mathbb{T}^d_n} \Xi_n(x) = 0
  \end{cases}
\end{equation}
appropriately embedded in the continuous torus, and properly rescaled converges \emph{in probability} to
\begin{equation}\label{eq:intro-cont-linear}
  \begin{cases}
    \mathcal{L} \Xi = \xi \\
    \int_{\mathbb{T}^d} \Xi = 0.
  \end{cases}
\end{equation}
Due to university, one expects that, up to constant, the specific choice of $\mathcal{L}_n$ is irrelevant, as any such approximation of $\mathcal{L}$ falls in the same university class.
In~\cite{chiarini2023fractional}, the authors pointed out that, as we are able to couple the noises $\xi,\xi_n$, we can explore non-trivial scaling $b_n$ of the error field $\Xi^{\err}_n \eqdef b_n(c_n \Xi_n - \Xi)$, where $c_n$ is chosen so that $c_n \Xi_n \longrightarrow \Xi $ in probability.
In fact, they showed that both the order of $b_n$ and the limiting field as $n \to \infty$ of $\Xi^{\err}_n$ depend on the so-called \emph{fractional Edgeworth expansion/fractional cumulants} of the law of the random walk generated by $\mathcal{L}_n$.
Much like the usual cumulants, fractional cumulants give us a sense of ``how good an approximation is to their limiting law''.

Whilst in~\cite{chiarini2023fractional}, the authors also obtain similar results for linear parabolic equations, they only conjectured results for non-linear equations.
The objective of the present paper is to provide a similar type of result for a non-linear equation albeit in a setting which simplifies our computations.

Let $\xi$ be a standard space-time white-noise on $\mathbb{R} \times \mathbb{T}^d$, and let $K_\eps(\cdot ) = K(\eps,\cdot)$ denote the heat kernel $e^{\eps \Delta}$ in $\mathbb{T}$.
In the following, the symbol $\conv$ denotes convolution in space, whilst $*$ will be used to denote space-time convolution.

We aim to perform a similar procedure to study the fluctuations of
singular stochastic partial differential equations of the type
\begin{equation} \label{eq:intro-eps}
  \begin{cases}
    (\partial_t - \Delta) \Phi_{\eps} = -P(\Phi_{\eps}) + \xi_{\eps} \\
    \; \;\; \,
    \int_{\mathbb{T}^d}dx  \Phi_\eps  =0                             \\
  \end{cases}
\end{equation}
where $\xi_\eps = K_\eps \conv \xi$ and $P$ is a polynomial of degree
greater or equal to $2$.
Throughout this text, we will mostly assume that $\Phi_\varepsilon$
starts on the respective invariant measure of the regularised (by
$K_{\eps}$) stochastic heat equation (SHE).
When appropriate, we will highlight the between that other choices of
initial conditions.

We will only consider the one-dimensional case, whilst in this case, the even more fundamental tools apply, we proceed by performing a Da Prato–Debussche approach.
That is, we write the solution of~\eqref{eq:intro-eps} as
\begin{align*}
  \Phi_{\eps} = \RS{i}_{\eps} + u_{\eps}
\end{align*}
where $\RS{i}_{\eps}$ is the (invariant) solution of the regularised stochastic heat equation
\begin{align*}
  (\partial_t - \Delta) \RS{i}_{\eps} =  \xi_{\eps}
\end{align*}
In the following, denote by $\Phi \eqdef \lim_{\eps\to 0 } \Phi_{\eps}$,
$\RS{i} \eqdef \lim_{\eps\to 0 } \RS{i}_{\eps}$, and
$u \eqdef \lim_{\eps\to 0 } u_{\eps}$, which converge in the Besov spaces $\mathcal{C}^{1/2-\kappa}_{\mf s}$, $\mathcal{C}^{1/2-\kappa}_{\mf s}$, and $\mathcal{C}^{5/2-\kappa}_{\mf s}$, respectively.

In the one-dimensional case, it is possible to show that $\RS{i}_\eps$ and $u_{\eps}$ converge on an appropriate space of H\"older continuous functions to well-defined random functions $\RS{i}$ and $u$.
Moreover, using standard heat-kernel properties, it is easy to show that
\begin{align*}
  \eps^{-1}( \xi_\eps - \xi) \longrightarrow \Delta \xi  \in
  \mathcal{C}^{-7/2-\kappa}
  \qquad
  \text{ and }
  \qquad
  \eps^{-1}( \RS{i}_\eps - \RS{i}) \longrightarrow \Delta \RS{i} =:
  \RS{D}\in \mathcal{C}^{-3/2 - \kappa},
\end{align*}
for all $\kappa >0$ almost surely.

This suggests that $\eps^{-1}$ is the appropriate scaling to see a non-trivial limit for $\eps^{-1}(\Phi_\eps-\Phi)$.
For instance, it would be natural to simply look at the resulting SPDE for $\bar{u}_{\eps} \eqdef \eps^{-1}(u_{\eps}-u)$, which is given by
\begin{align} \label{eq:intro-eps-remainder}
  (\partial_t - \Delta) \bar{u}_{\eps}
& =
  -
  \sum_{m=0}^k
  a_m
  \sum_{j=0}^m u^{m-j}_{\eps} \eps^{-1}(\RS{i}_{\eps}^j-\RS{i}^j)
  -
  \bar{u}_{\eps}
  \sum_{m=0}^k
  a_m
  \sum_{j=0}^m
  \binom{m}{j}\RS{i}^{j} \cdot \left(\sum_{\ell = 0 }^{m-j-1} u_{\eps}^{\ell} u^{m-j-1-\ell}\right)
\end{align}
where $P(\alpha) \eqdef \sum_{m=0}^ka_m \alpha^m$.

Now, even though both $\RS{i}$ and $u$ are well-defined functions, $\RS{D}$ is irregular enough for  products $\RS{D}\;\RS{i} ^{j-1} $ (which would appear by naively expanding the RHS and taking the limit) to not be well-defined.
Hence, after the appropriate Wick renormalisation (which will always be denoted by $:\,\cdot\,:$), we need to guarantee the convergence of
\begin{equation*}
  \chi^{(j)}_\eps \eqdef
  : \RS{D}_\eps  \RS{i} _\eps^{j-1}:
\end{equation*}
as $\eps \to 0$.

Our first result states the correct convergence of such difference of powers of $\RS{i}$ for $d=1$.

\begin{theorem}\label{thm:convergence-field-d-1}
  For dimension $d=1$ and for each $j \ge 2$ the field, there exists a field $\chi^{(j)} \in \mathcal{C}^{-3/2-\kappa}$ for all $\kappa >0$ such that
  \begin{align*}
    \chi^{(j)}_\eps
    \longrightarrow
    \chi^{(j)} \text{ in } \mathcal{C}^{-3/2-\kappa}_{\mf s}
  \end{align*}
  in probability for all $\kappa >0$.
  For all $j \ge 2$, the field $\chi^{(j)}$ has finite moments of all orders, and it is not Gaussian.
\end{theorem}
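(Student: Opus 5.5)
We would prove Theorem~\ref{thm:convergence-field-d-1} by the standard route for Wick-type stochastic objects: control the moments of $\chi^{(j)}_\eps$ tested against rescaled test functions, and use Kolmogorov/Besov embedding criteria to get convergence in $\mathcal{C}^{-3/2-\kappa}_{\mf s}$. Here $\RS{D}_\eps$ is read as the difference quotient $\eps^{-1}(\RS{i}_\eps-\RS{i})$, which tends to $\Delta\RS{i}$. Both $\RS{D}_\eps$ and $\RS{i}_\eps$ are linear functionals of the same white noise $\xi$. So $\chi^{(j)}_\eps=:\RS{D}_\eps\RS{i}_\eps^{j-1}:$ lies in the $j$-th homogeneous Wiener chaos. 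It can be written as a multiple Wiener--It\^o integral
\begin{equation*}
  \chi^{(j)}_\eps(z)=\int W_\eps^{(j)}(z;z_1,\dots,z_j)\,\xi(dz_1)\cdots\xi(dz_j),
\end{equation*}
with symmetrised kernel built from one factor $\eps^{-1}(K_\eps\conv P - P)$ and $j-1$ factors $K_\eps\conv P$, where $P$ is the heat kernel. By Nelson's hypercontractivity, every $L^p$ moment of a pairing $\langle \chi^{(j)}_\eps,\varphi^\lambda_z\rangle$ is controlled by its second moment. The claim of finite moments of all orders will come out of the same estimates, uniformly in $\eps$.

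The core step is the second-moment computation. We would compute $\E[\langle\chi^{(j)}_\eps,\varphi^\lambda_z\rangle^2]$ as an integral against products of covariances. There is one "red" covariance $C^{DD}_\eps$ or "mixed" covariance $C^{Di}_\eps$, and the rest are ordinary covariances $C^{ii}_\eps$ of $\RS{i}_\eps$. In $d=1$ the covariance $C^{ii}_\eps$ is bounded and continuous uniformly in $\eps$: it is H\"older of order $1$ in the parabolic scaling. Meanwhile $C^{DD}_\eps$ behaves like $\Delta^2 C^{ii}$, a distribution of parabolic degree $-3$, and $C^{Di}_\eps$ behaves like $\Delta C^{ii}$, of degree $-1$. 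The bounded factors can then be absorbed, and the bound reduces to
\begin{equation*}
  \E\big[\langle\chi^{(j)}_\eps,\varphi^\lambda_z\rangle^2\big]\lesssim \lambda^{-3-2\kappa'},
\end{equation*}
uniformly in $\eps$. This matches the regularity of $\RS{D}\in\mathcal{C}^{-3/2-\kappa}$.

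The main obstacle is that $C^{DD}_\eps$ is not integrable. Against $\varphi^\lambda\otimes\varphi^\lambda$ it is only meaningful because it is a derivative of a smooth object. We must therefore move the $\Delta$'s onto the test functions, or onto the product of the remaining $C^{ii}$ factors, via integration by parts. The difficulty is that $(C^{ii})^{j-1}$ is only Lipschitz, not $C^4$. We would handle this by Taylor-expanding $C^{ii}(z-z')^{j-1}$ around the diagonal and using that the Wick renormalisation removes the constant (diagonal) contributions. If this is too crude, we would fall back on a Fourier-space computation. On the torus the covariances are explicit sums $\sum_k e^{-2\eps|k|^2}|k|^{-2}e^{-|k|^2|t|}$, and the $\eps$-dependence enters only through the factor $(e^{-\eps k^2}-1)/\eps$, which is bounded by $k^2$. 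Convolution-type sums over $j$ momenta can then be estimated directly.

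For convergence, we would repeat the estimate for $\chi^{(j)}_\eps-\chi^{(j)}_{\eps'}$. Using $|(e^{-\eps k^2}-1)/\eps - (-k^2)|\lesssim \eps^{\theta}|k|^{2+2\theta}$, we gain a factor $\eps^{\theta}\lambda^{-2\theta}$. This gives a Cauchy sequence in $L^p$, hence convergence in probability in $\mathcal{C}^{-3/2-\kappa}_{\mf s}$ by the Kolmogorov-type criterion. The limit $\chi^{(j)}$ is a nonzero element of the $j$-th chaos with $j\ge2$. It is nonzero because the variance lower bound follows from the same computation. A nonzero random variable in a fixed chaos of order $\ge2$ is never Gaussian, since its fourth cumulant is strictly positive. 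So any nonzero pairing $\langle\chi^{(j)},\varphi\rangle$ is non-Gaussian, which proves the last claim.
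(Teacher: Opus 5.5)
Your plan is sound, but it reaches the result by a partly different route from the paper. For the key second-moment bound, the paper argues operator-theoretically. Since $(1-e^{-\eps\lambda})/(\eps\lambda)\le 1$, the kernel $\Qbb_{\eps,\delta}$ is an $L^2$ Fourier multiplier of norm at most one, converging strongly to $\mathcal{P}_0D^2(A+D^2)^{-1}\mathcal{P}_0$. The product $\Qbb_{\eps,\eps}Q^{j-1}_{\eps,\delta}$ is split as in \eqref{eq:diagonalsplit} into a multiplication operator composed with $\Qbb_{\eps,\eps}$, plus an $O(\|z-w\|_{\mf s}^{-2})$ remainder. The Schwartz kernel theorem then turns strong convergence into convergence of the covariance in $\mathcal{D}'$, identifying the limit explicitly. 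Convergence of the field is obtained qualitatively: Cauchy in $L^2$ on fixed test functions, plus the uniform $\lambda^{-3}$ bound, fed into Kolmogorov. Your zeroth-order Taylor expansion of $Q^{j-1}$ is the same split, and your bound $|(e^{-\eps k^2}-1)/\eps|\le k^2$ is the same cancellation. Your difference estimates additionally give an explicit rate $\eps^{\theta}$.

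One correction: the constant term $q^{j-1}\Qbb_{\eps,\eps}$ of that expansion is \emph{not} removed by Wick ordering, which only suppresses self-contractions at a single point. It is precisely the term responsible for the $\lambda^{-3}$ scaling. It has to be controlled by the multiplier bound, or by moving the Laplacians onto the test functions.

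For moments and non-Gaussianity, the paper uses Hairer--Quastel bounds on connected diagrams. These require translation invariance, and by themselves only show that the limiting fourth cumulant is finite, not that it is nonzero. Your route is more elementary and more complete. Since everything lives in the $j$-th Wiener chaos, hypercontractivity gives all moments from the second one, uniformly in $\eps$. Strict positivity of the fourth cumulant of any nonzero element of a chaos of order $j\ge 2$ then gives non-Gaussianity once the variance is nonzero.
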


With Theorem~\ref{thm:convergence-field-d-1} at hand, we perform Wick renormalisation on the first term of the RHS of~\eqref{eq:intro-eps-remainder}.
This leads to the renormalised equation
\begin{equation} \label{eq:intro-eps-remainder-renorm}
  (\p_t - \Delta) \tilde{u}_{\eps}
  =
  - \sum_{m=1}^k
  a_m
  \sum_{j=0}^m
  u_{\eps}^{m-j}
  \,\cdot \,
  \binom{m}{j}:\eps^{-1}(\RS{i}_\eps^{j}-\RS{i}^j):
  - \tilde{u}_{\eps}
  \sum_{m=1}^k
  a_m
  \sum_{j=0}^m
  \RS{i}^j  \cdot \left(\sum_{\ell=0}u^{\ell}_{\eps}u^{m-j-1-\ell}\right).
\end{equation}

\begin{theorem}\label{thm:equation-solution}
  Let $P: \mathbb{R} \to \mathbb{R} $ be a fixed polynomial of degree at least $2$.
	For $\kappa >0$,  there exists a random time $T$, such that $\mathbb{P}(T >0)=1$, in such that the field $\tilde{u}_{\eps}$ converges in probability in $\mathcal{C}^{1/2-\kappa}_{\mf s}([0,T] \times \mathbb{T})$ to the solution of the equation
  \begin{align}\label{eq:error}
    (\p_t-\Delta)\tilde{u}
    =
    - \tilde{u}  P'(\Phi)
    - \sum_{j =1}^{k}  \frac{\chi^{(j)}  P^{(j)}(u)}{(j-1)!},
  \end{align}
  where $\Phi \eqdef \lim_{\eps\to 0} \Phi_{\eps}$ defined in~\eqref{eq:intro-eps}, $u = \Phi - \RS{i}\,$, and $\chi^{(1)} \eqdef \RS{D}$. 
\end{theorem}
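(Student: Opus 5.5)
The strategy is to treat \eqref{eq:intro-eps-remainder-renorm} as a linear parabolic equation for $\tilde u_\eps$ whose coefficients converge, and to pass to the limit with a stability estimate in the mild formulation. Write $G$ for the heat semigroup on $\mathbb{T}$ and set
\begin{equation*}
  \tilde u_\eps(t) = G_t \tilde u_\eps(0) - \int_0^t G_{t-s}\Big( \tilde u_\eps V_\eps + F_\eps\Big)(s)\,ds ,
\end{equation*}
where $V_\eps$ is the multiplicative coefficient and $F_\eps$ is the forcing term. The potential is
\begin{equation*}
  V_\eps=\sum_m a_m\sum_j \binom{m}{j}\RS{i}^{j}\sum_{\ell} u_\eps^{\ell}u^{m-j-1-\ell}.
\end{equation*}
This is a finite sum of products of $\RS{i}$, $u$ and $u_\eps$. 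By the a priori convergence of $\RS{i}_\eps,u_\eps$ quoted in the introduction, $V_\eps\to V$ in $\mathcal{C}^{1/2-\kappa}_{\mf s}$ on $[0,T_0]$ for a random $T_0>0$. Telescoping, $\sum_\ell u^\ell u^{m-j-1-\ell}\to (m-j)u^{m-j-1}$, so $V=\sum_m a_m m(\RS{i}+u)^{m-1}=P'(\Phi)$. This identifies the first term of \eqref{eq:error}.

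For the forcing, expand $:\eps^{-1}(\RS{i}^j_\eps-\RS{i}^j):$ and show it converges to $j\,\chi^{(j)}$ in $\mathcal{C}^{-3/2-\kappa}_{\mf s}$. The algebraic identity is $a^j-b^j=(a-b)\sum a^{i}b^{j-1-i}$. After Wick ordering and using $\RS{i}_\eps\to\RS{i}$, the leading part is $j:\eps^{-1}(\RS{i}_\eps-\RS{i})\RS{i}_\eps^{j-1}:$. Since $\eps^{-1}(\RS{i}_\eps-\RS{i})$ and $\RS{D}_\eps$ differ by a term vanishing in the relevant topology (heat-kernel expansion), this reduces to Theorem~\ref{thm:convergence-field-d-1}. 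Rigorously I would do this at the level of the Wiener chaos: compute second moments of the difference in each chaos and apply Nelson hypercontractivity plus Kolmogorov, exactly as in the proof of Theorem~\ref{thm:convergence-field-d-1}.

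Then regroup
\begin{equation*}
  \sum_m a_m\sum_j \binom{m}{j} j\,\chi^{(j)} u^{m-j}.
\end{equation*}
Using $\binom{m}{j}j=\frac{m!}{(j-1)!(m-j)!}$, the $m$-sum equals $P^{(j)}(u)/(j-1)!$, which is the second term of \eqref{eq:error}; the case $j=1$ gives $\chi^{(1)}=\RS{D}$. We also have $u_\eps\to u$ in $\mathcal{C}^{5/2-\kappa}_{\mf s}$ (smoothness $>3/2+\kappa$). Hence the products $\chi^{(j)}_\eps u_\eps^{m-j}$ are well defined and continuous, with regularity $-3/2-\kappa$, via Young's multiplication theorem.

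The main step is the fixed-point and stability argument. Schauder estimates map a $\mathcal{C}^{-3/2-\kappa}$ forcing to $\mathcal{C}^{1/2-\kappa}_{\mf s}$. The product $\tilde u V$ is of regularity $1/2-\kappa$, which is comfortably admissible, and the linear map $w\mapsto \int G(wV)$ gains a factor $T^{\theta}$ on short intervals. This gives a contraction on $[0,T]$ with $T$ depending on $\|V\|$, $\|\chi^{(j)}\|$ and $\|u\|$. Uniformity in $\eps$ follows from the convergence of these norms. Subtracting the equations for $\tilde u_\eps$ and $\tilde u$ then gives $\|\tilde u_\eps-\tilde u\|\lesssim \|V_\eps-V\|+\|F_\eps-F\|+\|\tilde u_\eps(0)-\tilde u(0)\|\to0$ in probability.

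Restricting to a random time $T$ with $\mathbb{P}(T>0)=1$ handles the possibly finite-time existence of $u$ and the non-uniform norms. The initial datum needs separate care because stationary starts give $\eps^{-1}(u_\eps(0)-u(0))$, which must itself converge. The hardest point I anticipate is the identification of $:\eps^{-1}(\RS{i}^j_\eps-\RS{i}^j):$ with $j\chi^{(j)}$. The cross terms with mixed $\RS{i}_\eps,\RS{i}$ Wick products need covariance bounds uniform in $\eps$ at the singular scale $-3/2$.
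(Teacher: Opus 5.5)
Your proposal is essentially the paper's argument: pass to the limit in the coefficients of \eqref{eq:intro-eps-remainder-renorm} using $\eps^{-1}:(\RS{i}_\eps^{j}-\RS{i}^j):\,\to j\chi^{(j)}$ (reduced to Theorem~\ref{thm:convergence-field-d-1}), $u_\eps\to u$ in $\mathcal{C}^{5/2-\kappa}_{\mf s}$ and Theorem~\ref{thm:classical-product}, regroup into $P'(\Phi)$ and $P^{(j)}(u)/(j-1)!$, and close with a contraction for $X\mapsto\mathcal{I}(-M_gX+f)$ on a short random interval, with continuity of the fixed point in $(f,g)$. Your version is, if anything, cleaner than the paper's sketch, since the Schauder $T^\theta$ gain on the potential term is what makes the linear map contractive; two of your worries also go away, because $\RS{D}_\eps$ is defined to be exactly $\eps^{-1}(\RS{i}_\eps-\RS{i})$, and the stationary start $\Phi_\eps(0)=\RS{i}_\eps(0)$ gives $u_\eps(0)=u(0)=0$, so $\tilde u_\eps(0)=0$.
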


We notice that the only term in RHS of~\eqref{eq:error} that depends on $\tilde{u}$ is linear on $\tilde{u}$.
Moreover, we point out the polynomial identity in $\mathbb{R}$
\begin{align*}
  \beta P'(\alpha+\beta)
  =
  \sum_{ j =1}^k \frac{1}{(j-1)!}P^{(j)}(\alpha)\beta^j.
\end{align*}

Theorem~\ref{thm:equation-solution} should be seen as ``chain rule'' with regard to the approximation procedure.
We expect that similar results hold for a variety of other approximations.
However, the specific scaling parameter and equation should indeed depend on the specific approximation, much like the correct field in~\cite{chiarini2023fractional} depends on the specific discretisation mechanism.
We also notice that the field $\chi^{(j)}$ has the same Besov regularity of the space-time white-noise $\xi$.

\subsection*{Related bibliography}

In~\cite{gess2025higher}, the authors examine higher-order terms of supercritical SHE.
More precisely, for each fixed $\eps$ they examine an $L^p$ norm comparing the solution of the equations with an expansion, whose terms also depend on $\eps$.
Whilst they are able to obtain an expansion of arbitrary polynomial (in $\eps$) precision on such expansion, due to the supercritical nature of the equations they examine, the term on the expansion also depend on $\eps$.
Hence, are not able to characterise the limiting expression for $\eps=0$.

However, we notice the second-order equation is related to phenomena studied in~\cite{hairer2024renormalisation,gerencser2025weak}.
There, the authors study the limiting distribution of regularised KPZ equations, where the noise is taken as the space derivative of the white-noise (hence, rougher than the space-time white-noise).
The model (in the sense of regularity structures) associated to this equation includes the random distribution $(\p_x \RS{i}_\eps)\odot (\p_x \RS{i}_\eps)$.
However, as we apply this distribution to a test function and let $\eps \to 0$, diagonal contributions lead to the blow-up of the variance.
To recover a meaning to the equation, the authors damp the noise term by  $\eps^{3/4}$ (which can be seen as a weakly coupled equation).
In this regime, the term $\eps^{3/4}$, which prevents the explosion of the diagonal terms in variance of $\eps^{3/4}(\p_x \RS{i}_\eps)^{\odot 2}$, also kills any covariance across different space-time points, leading to the convergence to a new white-noise.

In our one dimensional setting, the noise term of the equation for $\eps^{-1}(\Phi_{\eps}-\Phi)$ is given by $\eps^{-1}(\xi_\eps-\xi) \approx \Delta \xi$.
Due to the extra regularity of our setting, the variance of $\chi^{(j)}$ does not blow-up, but in the corresponding distributions arising from more singular non-linear SPDEs likely would lead to a white-noise term.
However,  we believe that, much like in our setting, the equation for the error term will always be linear with non-trivial singular terms related to the original equation.\footnote{
The week before to the submission of this article on arXiv, the article~\cite{gerencsér2026sharprateprobabilisticallystrong} proved a similar result for the 1d KPZ equation using a mollifier to regularise the white-noise.
As expected, their results are similar in flavour, characterising the resulting error as the solution of a linear equation with very irregular noise.
However, their work requires a much heavier machinery (such as BPHZ renormalisation) given that the KPZ equation falls outside the da Prato-Debusche regime.
Moreover, due to the phenomena mentioned above, their Wick renormalised does indeed converge to white-noise.
}

\subsection*{On higher dimensions}
Whilst we focused on the case of dimension $1$ for simplicity, we expect that  Theorem~\ref{thm:convergence-field-d-1} can be extended to $d=2$  for every $j \ge 2$ using essentially the same techniques. However, we notice that as $\RS{i}(z)$ has infinite variance, the auxiliary function $q(z) \eqdef \mathbb{E}[\RS{i}(z)\RS{i}(z)]$ is not well-defined. 
Hence, one would need to use suitable upper bounds instead of our current argument, which uses the Schwartz kernel theorem to prove convergence of the field $\chi^{(j)}$.
In terms of the characterisation of the renormalised error, we believe that a similar result would also hold for $\tilde{u}_\varepsilon$, with the two main differences: the first is that, instead of studying at $\varepsilon^{-1}(\Phi_\eps - \Phi)$, we analyse $(\eps/2)^{-1}(\Phi_{\eps}-\Phi_{\eps/2})$ so that we can the polynomial identities for real values. 
The second is that the term $P'(\Phi)$ will need to be substituted by its renormalised counterpart $\sum_{m=0}^k a_k \sum_{j=0}^m \binom{m}{j} u^{m-j} :\RS{i}^j:$.

As for $d \ge 3$, the fields $\chi^{(j)}$ are expected to converge to independent white-noises for all $j \ge 2$.
As for the respective equations for $\tilde{u}$, we would expect similar results, provided that the solution for the respective equation exist.

\section{Preliminaries}\label{sec:preliminaries}

For the remainder of the article, we will focus on the one-dimensional case, setting the space dimension $d=1$.
In this article, the letter $\eps,\delta,\kappa >0$ will denote arbitrarily small real numbers.
The \emph{parabolic dimension} of the space-time is $d_{\mf s}=d+2 = 3$, it will also be useful to denote the scaling $\mf s = (2,1) \in \Z^{2}$.
For a given multi-index $k \in \Z^{2}$, we write $|k|_{\mf s} = \sum k_i {\mf s}_i$.

For $z=(t,x),z'=(t',x') \in \mathbb{R}\times \mathbb{T} $, we associate the \emph{parabolic distance}
\begin{align*}
  \|z-z'\|_{\mf s} \eqdef \sqrt{|t_1-t_2|} + d_{\mathbb{T}}(x_1,x_2).
\end{align*}
We notice that abuse of notation, as $(\mathbb{R}\times
\mathbb{T},\|\cdot\|_{\mf s})$ is not an actual normed space.
We write $B_r(z) \eqdef \{z':  \|z'-z\|_{\mf s} \le r\}$.
For $\ell \in \mathbb{N}$, we will write $\mf B_{\ell}$ to denote the $C^\ell(\mathbb{R}\times \mathbb{T})$ smooth functions whose support is contained in $B_{1/4}(0)$.
We also write $\mf B_\infty$ for the intersection of $\mf B_{\ell}$ for all $\ell \in \mathbb{N}$. 
Here, we are taking the normalisation $\mathbb{T} \eqdef [-1/2,1/2)$.
Moreover, for each $\lambda > 0$, we define
\begin{align*}
  \lambda \cdot z =
  \lambda \cdot (t,x) \eqdef
  (\lambda^2t,\lambda x),
\end{align*}
which is well-defined as long as $\|\lambda x\| \le 1$.

In the following, we will often omit the domain $\mathbb{R}\times
\mathbb{T}$ from the spaces of functions/distributions.
For any function $f \in \mf B_{\ell}$, we define
\begin{align*}
  \mathcal{S}^{\lambda}_z f (z')
  \eqdef
  \frac{f\left(\lambda^{-1}\cdot (z'-z)\right)}{\lambda^{3}}.
\end{align*}
Due to the support of $f$, we simply take the expression above to be
zero whenever $\|(z'-z)\|_{\mf s} \ge \lambda/4$.

In the following, we use introduce the appropriately scaled Besov spaces.
\begin{definition}\label{def:Besov-spaces}
  For a $\alpha > 0$, we consider the Besov Space $\mathcal{C}^{-\alpha}_{\mf s}$  of the distributions over $\mathbb{R}\times \mathbb{T}$ such that the following norm is finite
  \begin{align*}
    \|\eta\|_{\mathcal{C}^{-\alpha}}
    \eqdef
    \sup_{z \in \mathbb{R}\times \mathbb{T}}
    \sup_{\lambda >0 }
    \sup_{\substack{f \in \mf{B}_{\ell} \\ \|f\|_{C^{\ell}} \le 1}}
    \frac{\langle \eta,  \mathcal{S}^{\lambda}_z f \rangle}{\lambda^\alpha},
  \end{align*}
  where $\ell =\lceil  - \alpha\rceil$.

  For $\alpha \ge 0$ and a smooth function $u$, we denote by $P_{\mf s}^{(\alpha)} u$ the largest Taylor polynomial of $f$ whose terms are monomials with multi-index smaller or equal to $k$.
  We then define the Holder Space $\mathcal{C}^\alpha_{\mf s}$ to be the space of functions such that the following norm is finite
  \begin{align*}
    \|\eta\|_{\mathcal{C}^{-\alpha}}
    \eqdef
    \sup_{z \in \mathbb{R}\times \mathbb{T}^d}
    \sup_{\lambda >0 }
    \sup_{\substack{f \in \mf B_0 \\ \|f\|_{C^{0}} \le 1}}
    \frac{\langle u - P^{(\alpha)}_{\mf s}u,  \mathcal{S}^{\lambda}_z f \rangle}{\lambda^\alpha}.
  \end{align*}
\end{definition}

In particular, we recall the classical product extension theorem in the $\mathcal{C}^{\alpha}_{\mf s}$

\begin{theorem}[\cite{hairer2014theory}, Proposition 4.14]\label{thm:classical-product}
	For $\alpha,\beta \in \mathbb{R}$,  the map $(f,g) \mapsto f \cdot g $ extends to a continuous bilinear map from $\mathcal{C}^\alpha_{\mf s} \times \mathcal{C}^\beta_{\mf s}$ to $\mathcal{C}^{\alpha \wedge \beta}_{\mf s}$ if $\alpha + \beta >0$.
	Furthermore, if $\alpha \not \in \mathbb{N}$,  this condition is also necessary.
\end{theorem}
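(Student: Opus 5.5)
Since this is the classical result \cite[Proposition~4.14]{hairer2014theory}, the plan is to reproduce the standard Bony paraproduct argument, adapted to the parabolic scaling $\mf s=(2,1)$. The first step is to identify the spaces $\mathcal{C}^\gamma_{\mf s}$ of Definition~\ref{def:Besov-spaces}, for $\gamma\notin\mathbb{N}$, with the anisotropic Besov spaces $B^{\gamma}_{\infty,\infty,\mf s}$. These are defined through a Littlewood–Paley decomposition $\eta=\sum_{n\ge -1}\Delta_n\eta$, where $\Delta_n$ localises in Fourier space to the parabolic annulus $\{|k_0|^{1/2}+|k_1|\sim 2^n\}$, and the norm is $\sup_n 2^{n\gamma}\|\Delta_n\eta\|_{L^\infty}$. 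This equivalence is routine: write a rescaled test function $\mathcal{S}^\lambda_z f$ against the Littlewood–Paley blocks, and use that $\Delta_n$ has a kernel of the form $\mathcal{S}^{2^{-n}}_z\varphi$ for a Schwartz $\varphi$ with vanishing moments. For integer $\gamma$ one has to pass to Zygmund-type spaces, and this is exactly where the integrality restriction in the necessity statement enters.

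With this identification, I would decompose the product of smooth $f,g$ as
\[
f g = T_f g + T_g f + R(f,g),\qquad T_f g \eqdef \sum_{n} S_{n-1}f\,\Delta_n g,\qquad R(f,g)\eqdef \sum_{|n-m|\le 1}\Delta_n f\,\Delta_m g,
\]
where $S_{n-1}=\sum_{m<n-1}\Delta_m$. Each summand of $T_f g$ is Fourier-supported in an annulus of size $2^n$. Moreover, $\|S_{n-1}f\|_{L^\infty}\lesssim 2^{-n(\alpha\wedge 0)}\|f\|_{\mathcal{C}^\alpha}$ (up to a logarithm when $\alpha=0$, which is avoided by lowering the exponent slightly). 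This gives $T_f g\in\mathcal{C}^{\beta+(\alpha\wedge0)}_{\mf s}$, and symmetrically $T_g f\in\mathcal{C}^{\alpha+(\beta\wedge0)}_{\mf s}$. Both exponents are at least $\alpha\wedge\beta$: if, say, $\alpha<0$, then $\beta+\alpha$ versus $\alpha$ uses $\beta>-\alpha>0$. These bounds hold for all $\alpha,\beta$ and are bilinear.

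The main obstacle is the resonant term. Its summands are supported in balls, not annuli, of radius $\sim 2^n$, so $\Delta_N R(f,g)=\sum_{n\gtrsim N}\Delta_N(\Delta_n f\,\Delta_{n\pm1}g)$. This gives
\[
\|\Delta_N R(f,g)\|_{L^\infty}\lesssim \sum_{n\gtrsim N}2^{-n(\alpha+\beta)}\|f\|_{\mathcal{C}^\alpha}\|g\|_{\mathcal{C}^\beta},
\]
which is summable exactly when $\alpha+\beta>0$, and yields $R(f,g)\in\mathcal{C}^{\alpha+\beta}_{\mf s}\subset\mathcal{C}^{\alpha\wedge\beta}_{\mf s}$. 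I would check carefully that the anisotropic Fourier supports behave as in the isotropic case; they do, because the parabolic quasi-norm satisfies a triangle inequality up to a constant. Continuity and uniqueness of the extension then follow from the bilinear bounds, together with density of smooth functions in the closure of smooth functions in $\mathcal{C}^\gamma_{\mf s}$, or by working with the slightly weaker topology as is standard.

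For necessity when $\alpha\notin\mathbb{N}$ and $\alpha+\beta\le 0$, I would construct a lacunary counterexample. Take
\[
f_N=\sum_{n\le N}2^{-n\alpha}\cos(2^n x),\qquad g_N=\sum_{n\le N}2^{-n\beta}\cos(2^n x),
\]
which are uniformly bounded in $\mathcal{C}^\alpha_{\mf s}$ and $\mathcal{C}^\beta_{\mf s}$, respectively, and converge in slightly weaker norms. Their product has zero-frequency component $\tfrac12\sum_{n\le N}2^{-n(\alpha+\beta)}$, which diverges. Hence no continuous bilinear extension can exist. The condition $\alpha\notin\mathbb{N}$ guarantees that these Weierstrass-type sums genuinely lie in $\mathcal{C}^\alpha_{\mf s}$ as defined through Taylor polynomials.
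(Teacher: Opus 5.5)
Your sufficiency argument is the standard Bony paraproduct proof and is sound. It takes a different route from the one behind the citation. The paper gives no proof of its own, and in \cite{hairer2014theory} the positive direction is, as far as I recall, deduced from the reconstruction theorem: Taylor-lift the regular factor, multiply it by an abstract symbol representing the rough factor, and reconstruct the resulting modelled distribution of order $\alpha+\beta>0$. Your route buys an explicit bilinear formula and the finer fact that $fg-T_gf$ has regularity $\alpha+\beta$. The reconstruction route works directly with the local definitions of Definition~\ref{def:Besov-spaces}, so it needs no Littlewood--Paley identification and no special care at integer exponents. On that point, your argument only lands in $B^{\alpha\wedge\beta}_{\infty,\infty}$, which is strictly larger than $\mathcal{C}^{\alpha\wedge\beta}_{\mf s}$ when $\alpha\wedge\beta\in\mathbb{N}$ (e.g.\ $B^0_{\infty,\infty}\supsetneq L^\infty$). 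This is harmless: in that regime both factors are genuine functions and the claim is classical. For $\alpha\wedge\beta<0$, the identification $\mathcal{C}^\gamma_{\mf s}=B^\gamma_{\infty,\infty}$ holds for every $\gamma<0$, negative integers included, and the inputs only need $\mathcal{C}^\gamma_{\mf s}\hookrightarrow B^\gamma_{\infty,\infty}$.

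The genuine gap is in the necessity part. The statement only assumes $\alpha\notin\mathbb{N}$, but your counterexample also needs $g_N=\sum_{n\le N}2^{-n\beta}\cos(2^nx)$ to be uniformly bounded in $\mathcal{C}^\beta_{\mf s}$. With the Taylor-polynomial definition of integer-order spaces (the one you invoke, and the one that makes the caveat $\alpha\notin\mathbb{N}$ meaningful), this fails for every $\beta\in\mathbb{N}$. For $\beta=0$ one has $g_N(0)=N$. For $\beta=1$ the Lipschitz constant of $g_N$ grows like $N$, since the limit is the classical Zygmund-but-not-Lipschitz function. Cases such as $\beta=0,\ \alpha<0$ or $\beta=1,\ \alpha\le -1$ are covered by the statement. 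When $\alpha+\beta<0$ the repair is easy: use a single block, $f_N=2^{-N\alpha}\cos(2^Nx)$ and $g_N=2^{-N\beta}\cos(2^Nx)$. These are bounded in $\mathcal{C}^\alpha_{\mf s}$ and $\mathcal{C}^\beta_{\mf s}$ for all real exponents, integer or not, while $\langle f_Ng_N,\psi\rangle\approx\tfrac12 2^{-N(\alpha+\beta)}\int\psi\to\infty$ for a fixed bump $\psi$. Summing over scales is only needed when $\alpha+\beta=0$.

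The borderline case $\alpha=-m$, $\beta=m\in\mathbb{N}\setminus\{0\}$ needs a genuinely different idea, because no lacunary pair can work there. Since $\{\pm 2^n\}$ is a Sidon set, $\|\sum_n b_n2^{-nm}\cos(2^nx)\|_{\mathcal{C}^m}\gtrsim\sum_n|b_n|$. On the other hand, $\|\sum_n a_n2^{nm}\cos(2^nx)\|_{\mathcal{C}^{-m}}\asymp\sup_n|a_n|$. One then checks that the product stays bounded in $\mathcal{C}^{-m}_{\mf s}$; for instance its mean is $\tfrac12\sum_na_nb_n$.

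A way out is to make the regular factor non-lacunary. Take $h_N=\sum_{n\le N}\cos(2^nx)$, which is bounded in $B^0_{\infty,\infty}$, and set $f_N=\partial_x^m h_N$, which is bounded in $\mathcal{C}^{-m}_{\mf s}$. Let $g_N$ be bounded in $\mathcal{C}^m_{\mf s}$ with $\partial_x^m g_N$ a smoothed version of $\mathrm{sgn}(h_N)-c_N$, where $c_N$ makes it mean zero. Then $\int_{\mathbb{T}} f_Ng_N\,dx=(-1)^m\int_{\mathbb{T}}h_N\,\partial_x^m g_N\,dx\approx(-1)^m\|h_N\|_{L^1}\asymp\sqrt N$, by Zygmund's equivalence of $L^1$ and $L^2$ norms for lacunary series. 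After a partition of unity and a time cut-off, this rules out a bounded bilinear extension.
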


The symbol $\xi$ will always denote the space-time white-noise, which
is a $\mathcal{C}^{-3/2-\kappa}$.
We write  $K(t,x)$ to denote heat kernel in $\mathbb{T}^d$.
The usual lollipop symbol $\RS{i}_{\eps}$ is used for
\begin{align*}
  \RS{i}_\eps(z) = e^{\eps \Delta} \conv \RS{i}(z)
= \int_{-\infty}^t \int_{\mathbb{T}} K(t-s+\eps,x-y) \xi(s,y) dy ds,
\end{align*}
and $\RS{i}$ denotes its limit in $\mathcal{C}^{1/2 -\kappa}_{\mf s}$.

For $\eps > 0$ and $\delta \in \{0,\eps\}$, and $f \in
C^{\infty}_c(\mathbb{T}^d \times \mathbb{R})$, we write
\begin{equation*}
  \RS{D}_\eps (z)=
  \eps^{-1} \left(\RS{i}_\eps-\RS{i}\right)(z)
\end{equation*}
and
\begin{align*}
  Q_{\eps,\delta}(z_1,z_2)
  \eqdef
  \mathbb{E}\left[ \;\RS{i}_\eps(z_1)\RS{i}_\delta(z_2)\right] \qquad
  \overline{Q}_{\eps,\delta}(z_1,z_2)
  \eqdef
  \mathbb{E}\left[ \;\RS{D}_\eps(z_2)\RS{i}_\delta(z_2)\right] \qquad
  \overline{\overline{Q}}_{\eps}(z_1,z_2)
  \eqdef
  \mathbb{E}\left[ \;\RS{D}_\eps(z_1)\RS{D}_\eps(z_2)\right].
\end{align*}

The following Lemma follows from simple calculations and its proof will be omitted. 
\begin{lemma}\label{lem:char-Qs}
  Let $z_1=(t_1,x_1),z_2=(t_2,x_2) \in  \mathbb{R} \times \mathbb{T}$ and $\eps,\delta >0$.
  Then,
  \begin{equation*}
    Q_{\eps,\delta}(z_{1},z_{2})
    =
    \frac{1}{2} \int_{|t_1-t_2|}^{\infty}K(s+\eps + \delta,x_1-x_2)ds,
  \end{equation*}
  \begin{equation*}
    \Qb_{\eps,\delta}(z_1,z_2) = \frac{1}{2\eps}
    \int_{|t_1-t_2|}^{\infty} (K(s+\eps+\delta, x_1-x_2) -  K(s +
    \delta, x_1-x_2)) ds,
  \end{equation*}
  \begin{equation*}
    \Qbb_{\eps,\delta}(z_{1},z_{2}) = \frac{1}{2\eps
    \delta}\int_{|t_1-t_2|}^{\infty}\left(
      K(s+\eps+\delta,x_1-x_2)-K(s+\eps,x_1-x_2)-K(s+\delta,x_1-x_2)+K(s,x_1-x_2)
    \right)ds.
  \end{equation*}
  Additionally, if $\| z_1- z_2 \|_{\mf s} \leq 1 $,  then we have the uniform bounds
  \begin{equation*}
    |Q_{\eps,\delta}(z_{1},z_{2})|
    \lesssim
      1, 
    \quad
    |\Qb_{\eps,\delta}(z_{1},z_{2})| \lesssim \| z_1 - z_2 \|_{\mf s}^{-1},
    \quad
    |\Qbb_{\eps,\delta}(z_{1},z_{2})| \lesssim \| z_1 - z_2 \|_{\mf s}^{-3}.
  \end{equation*}
  Moreover,  if $\| z_1- z_2 \|_{s} \leq 1 $,  then we have the uniform bound
  \begin{align*}
    |Q_{\eps,\delta}(z_{1},z_{2}) - Q_{\eps,\delta}(z_1,z_1)|
    \lesssim \|z_1 - z_2 \|_{\mf s}.
  \end{align*}
\end{lemma}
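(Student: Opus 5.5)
The plan is to compute everything from the explicit representation $\RS{i}_\eps(z)=\int_{-\infty}^t\int_{\mathbb{T}}K(t-s+\eps,x-y)\xi(s,y)\,dy\,ds$, where $K$ is understood as the heat kernel with its zero Fourier mode removed. This is consistent with the mean-zero normalisation in \eqref{eq:intro-eps}, and it gives the bound $|K(s,x)|\lesssim e^{-cs}$ for $s\ge 1$.

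\textbf{Step 1: the formula for $Q_{\eps,\delta}$.} Assume without loss of generality $t_1\le t_2$. By the Itô isometry,
\begin{align*}
Q_{\eps,\delta}(z_1,z_2)=\int_{-\infty}^{t_1}\int_{\mathbb{T}}K(t_1-s+\eps,x_1-y)\,K(t_2-s+\delta,x_2-y)\,dy\,ds .
\end{align*}
Since $K$ is even in space, the semigroup property turns the $y$-integral into $K(t_1+t_2-2s+\eps+\delta,x_1-x_2)$. Substituting $r=t_1+t_2-2s$, which ranges over $[|t_1-t_2|,\infty)$ with $ds=dr/2$, gives the first formula. The formulas for $\Qb$ and $\Qbb$ then follow by linearity from $\RS{D}_\eps=\eps^{-1}(\RS{i}_\eps-\RS{i}_0)$: apply the $Q$-formula to each of the two or four terms. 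For $\Qbb$ the second factor is the analogous difference at scale $\delta$.

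\textbf{Step 2: telescoping the differences.} Write $a=|t_1-t_2|$ and $x=x_1-x_2$. Because $K(\cdot,x)$ decays exponentially at infinity,
\begin{align*}
\int_a^\infty\bigl(K(s+\eps+\delta,x)-K(s+\delta,x)\bigr)\,ds=-\int_{a+\delta}^{a+\delta+\eps}K(s,x)\,ds .
\end{align*}
Hence $\Qb$ is minus one half of the average of $K(\cdot,x)$ over an interval contained in $[a,\infty)$. In the same way, $\Qbb$ is minus one half of the average of $\partial_sK(\cdot,x)$ over a box $[a,a+\eps]\times[0,\delta]$ of times $s+r$, all of which are $\ge a$.

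\textbf{Step 3: the three uniform bounds.} All of them follow from the standard kernel bounds $|\partial_s^kK(s,x)|\lesssim(\sqrt s+|x|)^{-1-2k}$, valid for $s\le 1$. Since $\sqrt a+|x|\simeq\|z_1-z_2\|_{\mf s}$, we obtain $|\Qb|\lesssim\|z_1-z_2\|_{\mf s}^{-1}$ and $|\Qbb|\lesssim\|z_1-z_2\|_{\mf s}^{-3}$, uniformly in $\eps,\delta$. The bound $|Q|\lesssim 1$ holds because $\int_0^\infty|K(s,x)|\,ds\lesssim\int_0^1s^{-1/2}\,ds+\int_1^\infty e^{-cs}\,ds<\infty$.

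\textbf{Step 4: the increment bound.} This is the main point requiring care. Set $\eta=\eps+\delta$ and split
\begin{align*}
Q_{\eps,\delta}(z_1,z_2)-Q_{\eps,\delta}(z_1,z_1)=-\tfrac12\int_0^a K(s+\eta,x)\,ds+\tfrac12\int_0^\infty\bigl(K(s+\eta,x)-K(s+\eta,0)\bigr)\,ds .
\end{align*}
The first term is at most $\int_0^a s^{-1/2}\,ds\lesssim\sqrt a$. For the second term, estimating $|\partial_xK|$ directly produces a spurious logarithm. I would avoid this by recognising $\int_0^\infty K(s+\eta,\cdot)\,ds=e^{\eta\Delta}G$, where $G$ is the mean-zero Green's function of $-\Delta$ on $\mathbb{T}$. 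This function is explicit, $G(x)=\tfrac12(x^2-|x|)+c$, and hence Lipschitz. Since $e^{\eta\Delta}$ is a contraction on Lipschitz functions, the second term is $\lesssim|x|$ uniformly in $\eta$. Combining the two pieces gives the bound $\lesssim\sqrt a+|x|\simeq\|z_1-z_2\|_{\mf s}$.
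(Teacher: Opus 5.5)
Your proposal is correct. The paper omits this proof as ``simple calculations'', and your argument supplies exactly those calculations: It\^o isometry plus the semigroup property of the mean-zero kernel give the three formulas, and telescoping plus standard heat-kernel bounds give the estimates. Two choices are worth keeping. The zero mode must be removed, since otherwise $Q_{\eps,\delta}$ diverges. The Lipschitz Green's function $G$ is a clean way to avoid the spurious logarithm in the increment bound.

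One detail to add: Step 3 also needs $|\partial_s K(s,x)|\lesssim e^{-cs}$ for $s\ge 1$, because the averaging windows extend past $s=1$ when $\eps,\delta$ are not small. This follows immediately from the Fourier series of $K$.
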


For $j\geq 1$, set
\begin{equation*}
  \chi_{\eps,\delta}^{(j)}(z) \eqdef :
  \RS{D}_{\eps}(z)\RS{i}_{\delta}^{j-1}(z) :.
\end{equation*}

Let $f_1,\dots,f_p$ be test functions.
For each $p\ge 1$
\begin{equation*}
  \E\left[\prod_{k=1}^{p} \left\langle
  \chi_{\eps,\delta}^{(j)},f_k\right\rangle\right] = \int
  \prod_{k=1}^{p} f_{k}(z_k)  \E\left[\prod_{k=1}^{p}
  \chi_{\eps,\delta}^{(j)}\big(z_k \big) \right]d\Vec{z}_{p},
\end{equation*}
where $d\Vec{z}_{p} $ denotes $dz_1\cdots dz_p$.

For each occurrence of $\chi_{\eps,\delta}^{(j)}\big(z_k \big)$, introduce one red endpoint $r_k$ and $j-1$ black endpoints $b_k^{1}, \cdots b_{k}^{j-1}$. The total set of endpoints is
\begin{equation*}
  \mathcal{V}_{p,j} \eqdef \{r_{k}: 1 \leq k \leq p\} \cup
  \{b_{k}^{l}: 1 \leq k \leq p, 1\leq l \leq j-1\}.
\end{equation*}
For each $v \in \mathcal{V}_{p,j}$, let $\iota(v) \in \{1,\cdots,p\}$ denote the label of the vertex to which $v$ belongs.
We say $L$ is an admissible connection of the $p$ $j$-cherries, if $L$ is a perfect matching of $\mathcal{V}_{p,j}$ such that no edge joins two endpoints belonging to the same vertex, i.e, $\{v,w\} \in L$ implies that $\iota(v) \neq \iota(w)$.
We denote by $\mathfrak L_p^{(j)}$ the set of admissible connection.

Expand the product of Wick and apply Wick's Theorem to the Gaussian family $\left\{\RS{D}_{\eps}\big(z_h\big),\RS{i}_{\delta}(z_k)\right\}_{1\le h,k\le p}$.
Since each factor is renormalised by Wick, the half-edges belonging to the same cherry cannot be paired; hence, the sum runs over admissible connection.
Each black gluing point contributes the covariance associated with the colours of the two half-edges meeting there according to the following rules:

\[
  \PairRR_{z_h,z_k}
  \;\rightsquigarrow\;
  \Qbb_{\eps,\eps}(z_h,z_k),
  \qquad
  \PairBB_{z_h,z_k}
  \;\rightsquigarrow\;
  Q_{\delta,\delta}(z_h,z_k), 
\]
\[
  \PairRB_{z_h,z_k}
  =
  \PairPurple_{z_h,z_k}
  \;\rightsquigarrow\;
  \Qb_{\eps,\delta}(z_h,z_k).
\]
We will refer to edges formed by two red (resp.\ black) half-edges as red (resp.\ black), and edges formed by a red half-edge and a black half-edge as purple.\footnote{We also add some vertical bar for each red half-edge to make it easier to see in greyscale}

Then, it is natural, for each $\{v,w\} \in L$ and every sequence of
points $\Vec{z}_{p} = (z_{1},\dots,z_{p})$, to define

\[
  \mathrm{K}_{\eps,\delta}^{vw}(\Vec{z}_{p})
  =
  \begin{cases}
    \Qbb_{\eps,\eps}(z_h,z_k),  & (v,w)=(r_h,r_k),         \\
    Q_{\delta,\delta}(z_h,z_k), & (v,w)=(b_h^{l},b_k^{m}), \\
    \Qb_{\eps,\delta}(z_h,z_k), & (v,w)=(r_h,b_k^{m}),     \\
    \Qb_{\eps,\delta}(z_k,z_h), & (v,w)=(b_h^{l},r_k).
  \end{cases}
\]

Therefore, for each $p \ge 1$ and $\varphi_1,\dots,\varphi_p$ test
functions, hold
\begin{equation}\label{eq:momentsfield}
  \E\left[ \prod_{k=1}^{p} \left\langle \chi_{\eps,\delta}^{(j)},f_k
  \right\rangle \right]
  =
  \int \prod_{k=1}^p f_k(z_k)
  \sum_{L\in\mathfrak L_p^{(j)}}
  \prod_{\{v,w\}\in L}
  \mathrm{K}_{\eps,\delta}^{vw}(\Vec{z}_{p})
  \,d\Vec{z}_{p}.
\end{equation}

For example, for the second moment we define the covariance for the
field $\chi_{\eps}^{(j)}$ by
\begin{equation*}
  V_{\eps,\delta}^{(j)}(z,w) =
  \E\left[\chi_{\eps}^{(j)}(z)\chi_{\delta}^{(j)}(w) \right].
\end{equation*}
This covariance has exactly two kind of combinatorial structures: a red-red plus black-black pairing across the two vertices, or two crossed red-black plus black-black pairings, see Figure~\ref{fig:second-moment} for the diagrammatic representation of such structures.
In the first case the red half-edge at $z$ is paired with the red half-edge at $w$ and the remaining black half-edges at $z$ must be bijectively matched with black half-edges at $w$, yielding $(j-1)!$ pairings.
For the second case, the red half-edge at $z$ is paired with one of $j-1$ black half-edge at $w$ and the red half-edge at $w$ is paired with one of black half-edge at $z$.
There are $(j-1)^2$ choices.
The remaining black half-edges are paired in $(j-2)!$ ways.

Summing the two cases we have the formula
\begin{equation}\label{eq:cov-via-diags}
  V_{\eps,\delta}^{(j)}(z,w) = (j-1)!\left[ \Qbb_{\eps,\delta}(z,w)
    Q_{\eps,\delta}^{j-1}(z,w) +
  (j-1)\Qb_{\eps,\delta}(z,w)\Qb_{\delta,\eps}(w,z)Q_{\eps,\delta}^{j-2}(z,w)\right].
\end{equation}
The factor $\Qbb_{\eps,\delta}(z,w) Q_{\eps,\delta}^{j-1}$ contains the critical kernel $\Qbb_{\eps,\delta}$, and pointwise convergence away from diagonal is not sufficient because $\| z-w\|_{s}^{-3}$ is not locally integrable in parabolic dimension $3$.
We need to make the limit precise.

\begin{figure}[tb]
  \centering
\includegraphics[width=0.6\textwidth]{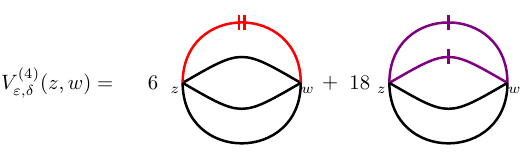}
  \caption{Diagram representation for $V_{\eps,\delta}^{(4)}(z,w)$}
  \label{fig:second-moment}
\end{figure}

Let $\mathcal{H} = L^{2}(\R_{+} \times \bb{T})$.
The family $e_{k}(x) = e^{2\pi i kx}$, $k \in \Z$ is a total orthonormal basis of $L^2(\bb{T})$, and $\mathcal{H}$ is canonically isomorphic to $L^2(\R_{+}) \hat{\otimes} L^{2}(\bb T)$, where $\hat{\otimes}$ denotes the Hilbert tensor product.
For $f \in \mathcal{H}$ set

\[
  f_{k}(t) = \int_{\bb T} f(t,x)e^{-2\pi i kx}dx, \quad k \in \Z, \quad t>0.
\]
By Fubini's theorem and Parseval's identity we have the following equality
\[
  \|f\|_{\mathcal{H}}^{2} = \sum_{k \in \Z}\|f_{k}\|_{L^2(\R_+)}^{2}.
\]
Thus, we may see the spatial Fourier transform
\[
  \mathcal{F} \colon
  \begin{aligned}[t]
    \mathcal{H} & \to \bigoplus_{k\in \Z} L^{2}(\R_+) \\
    f           & \mapsto (f_{k})_{k \in \Z}
  \end{aligned}
\]
as a unitary isomorphism of Hilbert spaces.

Consider $D = -\Delta$ defined on $H^2(\bb T)$, we have that $\mathcal{F}(D g)(k) = \lambda_k \mathcal{F}(g)(k)$ for $g \in L^2(\mathbb{T}) \cap C^{2}(\mathbb{T})$, where $\lambda_k = 4\pi^2k^2$, for $k \in \mathbb{Z}$.
More broadly, for a function $\psi:  \mathbb{R} \to \mathbb{C}$ with at most polynomial growth at infinity,  we use the language of Fourier multipliers to define the operator $\psi(D)$ through the Fourier relation

\begin{equation*}
  \mathcal{F}\Big(\psi(D)(g)\Big)(k) \eqdef \psi(\lambda_k) \mathcal{F}(g)(k),
\end{equation*}
for any $g \in L^2(\mathbb{T}) \cap C^{\infty}(\mathbb{T})$.
We may extend $D$ and $\psi(D)$  to $\mathcal{H}$ as the operator acting only on the spatial variable, in which case we have $\mathcal{F}(\phi(D)f) = (\psi(\lambda_k)f_k)_{k \in \Z}$.

On the space $L^2(\R_+)$, define the operator $A = - \p_t^2$ with $ D(A) =  H^2(\R_+)$.
It is well-known that it is a positive self-adjoint operator with a purely continuous spectrum $\sigma(A) = [0,+\infty)$.
Likewise, $A$ is extended to $\mathcal{H}$ as the operator acting only on the time variable and is transported by $\mathcal{F}$ to the constant family, i.e, $\mathcal{F}(Af) = (Af_k)_{k \in \Z}$.

These two extensions commute and hold
\begin{equation}\label{eq:A+D2}
  \mathcal{F}((A+D^2)f) = ((A+\lambda_k^2)f_k)_{k \in \Z}.
\end{equation}

Also, define $\mathcal{P}_0$ as the orthogonal projection of $\mathcal{H}$ onto
the orthogonal complement of the subspace
\begin{equation*}
  \left\{ g\otimes 1; g \in L^2(\R_+) \right\}.
\end{equation*}
Under $\mathcal{F}$ this projection is given by
\[
  \mathcal{F}(\mathcal{P}_0 f) = \left( \dots, f_{-1}, 0, f_{1},\dots\right).
\]
We notice that, to define the Fourier multiplier, $\psi$ can also take values on a space of functions/operators, provided that all operators derivatives of $\|\p_\lambda^k\psi(\lambda)\|_{\textit{op}}$ (or the appropriate function norm) grow at most polynomially at infinity.

We are particularly interested in one type of operator.
For every $\lambda>0$, we have that the Green function  $G_{\lambda} $ on $L^{2}(\mathbb{R}^+)$  with Dirichlet boundary at $t=0$ can be calculated explicitly.
More precisely, for $g \in L^{2}(\mathbb{R}^+)$, we have that

\begin{equation} \label{eq:green-on-line}
  (A+\lambda^2)^{-1}(g)(t) =
  \int_{0}^{\infty}
  G_\lambda(t,s)
  g(s)ds
  =
  \int_{0}^{\infty}
  \frac{e^{-\lambda|t-s|} }{2\lambda}
  g(s)ds.
\end{equation}

We can then define $\bar{\psi}(\lambda) \eqdef (A+\lambda^2)^{-1}$ and define its associated Fourier multiplier $(A+D^2)^{-1}$.
Moreover, for such choice of $\psi$, we have that $\psi(D)$ does indeed coincide with the inverse of the operator $(A+D^2)$ in $\mathcal{P}_0(\mathcal{H} \cap   C_c^{\infty}(\mathbb{R}^+ \times \mathbb{T}))$, which can then be extended to $\mathcal{P}_0(\mathcal{H})$.

\begin{remark}
  If we were to set the initial condition of \eqref{eq:intro-eps} to be $0$, instead of the invariant measure of SHE, we would set $D(A) \eqdef \{\alpha \in H^2(\mathbb{R}^+): \alpha(0)=0\}$.
  This would lead to a slightly different Green function for $G_\lambda = (A+\lambda^2)^{-1}$  below, but does not alter the convergence results.
\end{remark}

\section{Proof of Theorem~\ref{thm:convergence-field-d-1}}\label{sec:proof-of-theorem-thm-d-1}
We start by characterising the limit of $\Qbb$ for $d=1$.

\begin{theorem}\label{tep:kernelconv}
  For $d=1$, the kernel $\Qbb_{\eps,\delta}$ defines a bounded self-adjoint operator on $\mathcal{H}$ with $\| \Qbb_{\eps,\delta}\|_{\textit{op}} \leq 1$.
  Furthermore, there exists a positive self-adjoint contraction $\Qbb$ such that $\Qbb_{\eps,\eps}(f) \longrightarrow \Qbb(f)$ strongly in $\mathcal{H}$.

  The operator $\Qbb$ is explicitly given by
  \begin{equation}\label{eq:Qbblimit}
    \Qbb = \mathcal{P}_0 D^2(A+D^2)^{-1}\mathcal{P}_0.
  \end{equation}
\end{theorem}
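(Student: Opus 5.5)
Diagonalise everything with the spatial Fourier transform $\mathcal{F}$, so that $\Qbb_{\eps,\delta}$ reduces to a family of scalar-in-$k$ operators on $L^2(\R_+)$.

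\emph{Step 1: explicit Fourier form of the kernel.} Write $K(s,x)=\sum_{k}e^{-\lambda_k s}e_k(x)$ and insert it into the formula for $\Qbb_{\eps,\delta}$ in Lemma~\ref{lem:char-Qs}. The double difference factorises as
\begin{equation*}
K(s+\eps+\delta)-K(s+\eps)-K(s+\delta)+K(s)=\sum_k e^{-\lambda_k s}(1-e^{-\lambda_k\eps})(1-e^{-\lambda_k\delta})e_k .
\end{equation*}
The $k=0$ mode vanishes identically, which is the source of the projection $\mathcal{P}_0$. Integrating in $s$ over $[|t_1-t_2|,\infty)$ gives
\begin{equation*}
\Qbb_{\eps,\delta}(z_1,z_2)=\sum_{k\neq 0} m_{\eps,\delta}(\lambda_k)\,\frac{e^{-\lambda_k|t_1-t_2|}}{2\lambda_k}\,e_k(x_1-x_2),
\qquad
m_{\eps,\delta}(\lambda)\eqdef\frac{(1-e^{-\lambda\eps})(1-e^{-\lambda\delta})}{\eps\delta}.
\end{equation*}
Comparing with \eqref{eq:green-on-line}, the $k$-th block is therefore $m_{\eps,\delta}(\lambda_k)$ times the operator on $L^2(\R_+)$ with kernel $e^{-\lambda_k|t-s|}/(2\lambda_k)$. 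That is, $\Qbb_{\eps,\delta}=\psi_{\eps,\delta}(D)$ with $\psi_{\eps,\delta}(\lambda)=\mathbf 1_{\lambda\ne0}\,m_{\eps,\delta}(\lambda)(A+\lambda^2)^{-1}$. (Some care with the time domain: the kernel $e^{-\lambda|t-s|}/2\lambda$ restricted to $\R_+\times\R_+$ is exactly what \eqref{eq:green-on-line} uses.)

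\emph{Step 2: uniform bound and self-adjointness.} Since $\mathcal{F}$ is unitary and the operator is block diagonal, $\|\Qbb_{\eps,\delta}\|_{op}=\sup_{k\neq0}\|\psi_{\eps,\delta}(\lambda_k)\|_{op}$.
\begin{itemize}
\item The kernel $e^{-\lambda|t-s|}/2\lambda$ is symmetric and nonnegative, and has $L^1$ row integrals at most $1/\lambda^2$. By Schur's test (or by Fourier transform on $\R$, where its symbol is $(\omega^2+\lambda^2)^{-1}$), it is self-adjoint and positive with norm at most $\lambda^{-2}$.
\item The elementary inequality $1-e^{-x}\le x$ gives $0\le m_{\eps,\delta}(\lambda)\le \lambda^2$.
\end{itemize}
Hence each block is a positive self-adjoint operator of norm at most $1$, and so is $\Qbb_{\eps,\delta}$ itself.

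\emph{Step 3: strong convergence.} As $\eps\to0$, $m_{\eps,\eps}(\lambda)\to\lambda^2$ pointwise, so each block converges in operator norm to $\lambda_k^2(A+\lambda_k^2)^{-1}$, with uniform bound $1$. For $f\in\mathcal{H}$, write $\|\Qbb_{\eps,\eps}f-\Qbb f\|^2=\sum_{k\ne0}\|(\psi_{\eps,\eps}-\psi)(\lambda_k)f_k\|^2_{L^2(\R_+)}$. Each term tends to $0$ and is dominated by $4\|f_k\|^2$, which is summable. Dominated convergence over $k$ then gives the strong limit, identified as $\mathcal{P}_0D^2(A+D^2)^{-1}\mathcal{P}_0$ via \eqref{eq:A+D2}. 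Positivity, self-adjointness and the contraction property pass to the limit, or can be read off the blocks directly.

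\emph{Main obstacle.} The delicate step is Step 1: justifying that the integral operator with the singular kernel $\Qbb_{\eps,\delta}$ (singular like $\|z\|^{-3}$ in the limit) equals the Fourier multiplier. For $\eps,\delta>0$ the kernel is smooth and the series converges absolutely, since $m_{\eps,\delta}(\lambda)/\lambda$ decays. This lets me exchange the sum and the integrals for test functions $f\in C_c^\infty$ and then extend by density using the bound of Step 2. Only the limit object $\Qbb$ is defined purely as a multiplier, which is why the convergence is stated in the strong sense.
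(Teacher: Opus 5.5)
Your proposal is correct and follows the paper's proof essentially step for step: Fourier diagonalisation in space with the $k=0$ mode vanishing, the blockwise bound combining $(1-e^{-\lambda\eps})(1-e^{-\lambda\delta})/(\eps\delta)\le\lambda^2$ with $\|(A+\lambda^2)^{-1}\|_{\textit{op}}\le\lambda^{-2}$, checking the kernel representation on test functions via Fubini and unitarity of $\mathcal{F}$, and dominated convergence over $k$ (with the bound $4\|f_k\|^2$) for the strong limit. The only minor difference is how you handle the time block. You bound it and check its positivity via Schur's test and the whole-line symbol $(\omega^2+\lambda^2)^{-1}$, whereas the paper regards it as the resolvent of $A$ on $\R_+$. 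Your route is the cleaner justification, since $e^{-\lambda|t-s|}/2\lambda$ restricted to $\R_+^2$ is a compression of the whole-line resolvent and not a Dirichlet Green function. One small correction: $\Qbb_{\eps,\delta}$ is bounded and continuous rather than smooth, but absolute convergence of the series is all your Fubini step needs.
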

\begin{proof}
  By expanding the heat-kernel in Fourier, we may write pointwise for
  every $s\ge0$, $x\in\bb T$,
  \begin{equation*}\label{eq:Kdifferences}
    K(s+\eps + \delta,x) - K(s+\eps,x) - K(s+\delta,x) + K(s,x) =
    \sum_{k\in\Z} e^{-\lambda_k s}\Big(1-e^{-\lambda_k
    \eps}\Big)\Big(1-e^{-\lambda_k \delta}\Big) e_k(x).
  \end{equation*}
  With this, by using Lemma~\ref{lem:char-Qs} and Fubini-Tonelli
  Theorem, we have that
  \begin{equation}\label{eq:qbbinfourier}
    \Qbb_{\eps,\delta}(z_{1},z_{2}) = \sum_{k\in
    \Z}\widehat{Q}_{\eps,\delta}(t_1-t_2,k)e_k(x),
  \end{equation}
  where  $\widehat{Q}_{\eps,\delta}(t_1-t_2,0) \equiv 0$ and 
  \begin{equation*}
    \widehat{Q}_{\eps,\delta}(t_1-t_2,k) =
    \lambda_k^2\frac{\Big(1-e^{-\lambda_k \eps}\Big)}{\eps
    \lambda_k}\frac{\Big(1-e^{-\lambda_k \delta}\Big)}{\delta
    \lambda_k} \frac{e^{-\lambda_k|t_1-t_2|} }{2\lambda_k} 
\text{ for } k \neq 0.
  \end{equation*}
  In particular, we can write $Q_{\eps,\delta}(t_1-t_2)$ as a Fourier multiplier operator.
  \begin{equation*}
    \widehat{Q}_{\eps,\delta}(t_1-t_2,k)
    =
\psi_{\delta}(\lambda_k)
    \psi_{\eps}(\lambda_k)
    G_{\lambda_k}(t_1-t_2),
  \end{equation*}
  where $\psi_{\eps}(\lambda)\eqdef (1-e^{-\eps \lambda })/\eps =
  \lambda + \mathcal{O}(\eps \lambda^2)$.
  As $\widehat{Q}_{\eps,\delta}(0) = 0$, we may define
  \begin{equation*}
    \Qbb_{\eps,\delta} = \mathcal{F}^{-1}\left( \bigoplus_{k \in \Z}
    \widehat{Q}_{\eps,\delta}(k)\right)\mathcal{F}.
  \end{equation*}
  We now will check that this operator is represented by the kernel
  $\Qbb_{\eps,\delta}$ in the sense that
  \begin{equation*}
    \langle \Qbb_{\eps,\delta}f,g \rangle_{\mathcal{H}} =
    \iint\Qbb_{\eps,\delta}(z_1,z_2)f(z_1)g(z_2)dz_1 dz_2
  \end{equation*}
  for every $f,g \in C_{c}^{\infty}(\R_+ \times \bb T)$.

  Both sides are finite: the right side because $\eps,\delta >0$ makes $\Qbb_{\eps,\delta}$ a smooth function, so the integral against compactly supported smooth functions converges absolutely.
  Expanding $f$ and $g$ in the basis $\{e_{k}\}$ in the spatial variables, using~\eqref{eq:qbbinfourier} using Fubini's Theorem and the unitary of $\mathcal{F}$, we obtain
  \begin{align*}
    \iint \Qbb_{\eps,\delta}(z_1,z_2)f(z_1)g(z_2)dz_1 dz_2 & =
    \sum_{k \in
    \Z}\int_{0}^{\infty}\int_{0}^{\infty}f_{k}(t_1)\widehat{Q}_{\eps,\delta}(t_1-t_2,k)dt_1
    \overline{g_{k}(t_2)}dt_2                                  \\ &= \sum_{k \in \Z} \left\langle
    \widehat{Q}_{\eps,\delta}(k)f_k, g_k \right\rangle_{L^2(\R_+)}
    \\ &= \left\langle \left(\bigoplus_{k \in
    \Z}\widehat{Q}_{\eps,\delta}(k)\right)\mathcal{F}f,\mathcal{F}g
    \right\rangle = \langle \Qbb_{\eps,\delta}f,g \rangle_{\mathcal{H}}.
  \end{align*}
  Now we will show that $\Qbb_{\eps,\delta}$ is a positive bounded self-adjoint operator.
  But this follows from the fact that for every $\lambda>0$, $\lambda^2 G_{\lambda}$ is a positive self-adjoint contraction on $L^2(\R_+)$.
  Notice that for $k \neq 0$,
  \[
    \| \widehat{Q}_{\eps,\delta}(k)\|_{\textit{op}} =
    \frac{\Big(1-e^{-\lambda_k \eps}\Big)}{\eps
    \lambda_k}\frac{\Big(1-e^{-\lambda_k \delta}\Big)}{\delta
    \lambda_k}\|\lambda_k^2(A+ \lambda_k^2)^{-1}\|_{\textit{op}} \leq 1.
  \]
  Then, for each $k\neq0$, $\widehat{Q}_{\eps,\delta}(k)$ is a positive self-adjoint contraction and $\widehat{Q}_{\eps,\delta}(0) = 0$ trivially satisfies the same bound and properties.
  Then, by a direct-sum fact $\Qbb_{\eps,\delta} = \mathcal{F}^{-1}(\bigoplus_{k \in \Z} \widehat{Q}_{\eps,\delta}(k))\mathcal{F}$ is bounded, self-adjoint, positive and a contraction on $\mathcal{H}$.
  This is the first assertion of the theorem.

  For $k \neq 0$, set $\widehat{Q}(k) = \lambda_k^2 (A+ \lambda_k^2)^{-1}$ and $\widehat{Q}(0) = 0$, so $\Qbb := \mathcal{F}^{-1}(\bigoplus_{k \in \Z} \widehat{Q}(k)) \mathcal{F}$ is a positive self-adjoint contraction on $\mathcal{H}$.
  Fix $f\in \mathcal{H}$, by unitary of $\mathcal{F}$ and linearity
  \begin{align*}
    \| \Qbb_{\eps,\eps}f - \Qbb f \|_{\mathcal{H}}^{2}
    & = \sum_{k \in \Z}\left|
    \left(\frac{(1-e^{-\lambda_k\eps})}{\eps\lambda_k}\right)^2 -
    1\right|^2 \|\widehat{Q}(k)f_k\|_{L^2(\R_+)}^{2}
    \leq \sum_{k \in \Z}\left|
    \left(\frac{(1-e^{-\lambda_k\eps})}{\eps\lambda_k}\right)^2 -
    1\right|^2 \|f_k\|_{L^2(\R_+)}^{2}.
  \end{align*}
  Since $\frac{(1-e^{-\lambda_k\eps})}{\eps\lambda_k} \rightarrow 1$ as $\eps \downarrow 0$, $\left| \left(\frac{(1-e^{-\lambda_k\eps})}{\eps\lambda_k}\right)^2 - 1\right|^2 \leq 4$, and $\sum_{k\in \Z}\|f_k\|_{L^2(\R_+)}^{2} = \|f\|_{\mathcal{H}}^{2} < \infty$ we may conclude that $\Qbb_{\eps,\eps} \rightarrow \Qbb f$ for every $f \in \mathcal{H}$, proving the second assertion of the theorem.

  It remains identify $\Qbb$ with~\eqref{eq:Qbblimit}.
  Using that $(A+D^2)^{-1}=\bar{\psi}(D)$ on $\mathcal{P}_0(\mathcal{H})$ (with $\bar{\psi}$ defined below~\eqref{eq:green-on-line}), and that $D^2$ can also be seen as a Fourier multiplier, we have that $D^2(A+D^2)^{-1} = \tilde{\psi}(D)$ with $\tilde{\psi}(\lambda)=\lambda^2\bar{\psi}(\lambda)$.
  We then notice that $\mathcal{P}_0 D^2(A+D^2)^{-1}\mathcal{P}_0=D^2(A+D^2)^{-1}\mathcal{P}_0$ in $\mathcal{H}$.
  Finally, we have that $\mathcal{F}(\mathcal{P}_0 D^2(A+D^2)^{-1}\mathcal{P}_0 (f)) = \widehat{Q}_{\eps,\delta}(f)$ for all $k \in  \mathbb{Z}$.
  By inverting the Fourier transform, this completes the proof.
\end{proof}

From this, we can use the Poisson summation formula to show that 
\begin{align*}
  \Qbb(z)
  = \sum_{k \neq 0} \lambda_k e^{-\lambda_k |t|} e^{i k x}
  = \sum_{m \neq 0} I(t,x-2m ),
\end{align*}
with $I(t,x) \eqdef \int_{\mathbb{R}} 2\pi|\theta|^2 e^{-|\theta|^2 |t| + i \theta \cdot x}$.
This sum converges for $z \neq 0$ and satisfies the uniform bound
\begin{align*}
  |\Qbb (z)| \lesssim \|z\|_{\mf s}^{-3},
\end{align*}
for $\|z\|_{\mf s} \le 1$. 

Whilst the previous theorem shows that the naive scaling of $\Qbb$ does not lead to blow-ups in expressions such as $\langle \Qbb f , g \rangle_{\mc H}$, we still need account for the other terms appearing in integral expressions for the moments of $\chi^{(j)}$.
In fact, even for the second moment, the expression~\eqref{eq:cov-via-diags} implies we must account for the other terms in the product inside of the integral.
To show that such terms do not cause any harm, we will keep the operator point-of-view, which is based on the Schwartz kernel theorem and its generalisations.
That will allow us to harvest the same cancellations that prevent the blow-up due the scaling $\|z\|^{-d_{\mf s}}_{\mf s} = \|z\|^{-3}_{\mf s}$.

We start by introducing the necessary results from operator theory.
Let $\mc D(\R_+\times \bb T) = C_{c}^{\infty}(\R_+\times \bb T)$ and $\mc D'(\R_+\times \bb T)$ its dual.
Any bounded operator $A$ on $\mc H$ induces a continuous bilinear form $(f,g) \mapsto \langle Af , g \rangle_{\mc H}$ on $\mc D\times \mc D$ because $\mc D \hookrightarrow \mc H$ continuously.
By Schwartz kernel theorem~\cite[Theorem 5.2.1]{hormander2003analysis}, there is a unique distribution $\mc K(A) \in \mc D'((\R_+\times \bb T)^2)$, the Schwartz kernel of $A$, such that
\begin{equation}\label{eq:swartzkernel}
  \langle \mc K(A),f \otimes g\rangle = \langle Af, g \rangle_{\mc H}
  \quad \text{for all } f,g \in \mc D,
\end{equation}
and, since finite sums $\sum_{l} f_{l}\otimes g_{l}$ are dense in $\mc D((\R_+\times \bb T)^2)$, $\mc K(A)$ is determined by~\eqref{eq:swartzkernel}.

For $\eta \in C_{c}^{\infty}(\R_+)$ and $g \in L^{\infty}([0,\widehat{T}]\times \bb T)$, where $\widehat{T} = \sup \mathrm{supp}\; \eta$, define the pointwise multiplication operator $M_{\eta g}$ by $M_{\eta g}f(t,x) = \eta(t)g(t,x)f(t,x)$ extending it by 0 outside $\mathrm{supp}\; \eta$.
Then $M_{\eta g}$ is bounded and self-adjoint on $\mc H$ with
\begin{equation*}\label{eq:multiplicationbound}
  \| M_{\eta g} \|_{op} \leq
  \|\eta\|_{\infty}\|g\|_{L^{\infty}([0,\widehat{T}]\times \bb T)}.
\end{equation*}

\begin{lemma}\label{lem:kernelconv}
  If $A_n \rightarrow A$ strongly on $\mc H$ with $\sup_{n}
  \|A_{n}\|_{op} < \infty$, then $\mc K(A_n) \rightarrow \mc K(A)$ in
  $\mc D'((\R_+\times \bb T)^2)$.
\end{lemma}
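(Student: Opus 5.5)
We have to show that $\langle \mc K(A_n),\Phi\rangle \to \langle \mc K(A),\Phi\rangle$ for every $\Phi \in \mc D((\R_+\times\bb T)^2)$. The strategy is to verify convergence first on elementary tensors $f\otimes g$, then pass to general $\Phi$ by density. The density step requires a bound that is uniform in $n$, and this comes from $\sup_n\|A_n\|_{op}<\infty$.

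\emph{Step 1: elementary tensors.} For $f,g\in\mc D$, the defining identity~\eqref{eq:swartzkernel} gives
\begin{equation*}
\langle \mc K(A_n),f\otimes g\rangle=\langle A_nf,g\rangle_{\mc H}\longrightarrow\langle Af,g\rangle_{\mc H}=\langle \mc K(A),f\otimes g\rangle ,
\end{equation*}
because $A_nf\to Af$ in $\mc H$. By linearity, convergence then holds on the algebraic tensor product $\mc D\otimes\mc D$.

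\emph{Step 2: a uniform estimate.} We need a continuous seminorm $p$ on $\mc D(K\times K)$, for each compact $K\subset\R_+\times\bb T$, such that
\begin{equation*}
|\langle \mc K(B),\Phi\rangle|\le C\|B\|_{op}\,p(\Phi)
\end{equation*}
for every bounded operator $B$ and every $\Phi$ supported in $K\times K$.

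To obtain this, we expand $\Phi$ in a tensor basis. Take $\chi\in\mc D$ with $\chi\equiv1$ near $K$, and expand $\Phi$ in a Fourier series on a box containing $K$ (in the variables $(t,x)$, periodic in $x$ already):
\begin{equation*}
\Phi=\sum_{\alpha,\beta}c_{\alpha\beta}\,(\chi e_\alpha)\otimes(\chi e_\beta).
\end{equation*}
Here $\Phi$ is smooth, so the coefficients $c_{\alpha\beta}$ decay faster than any polynomial, with $\sum|c_{\alpha\beta}|\le C\,p(\Phi)$ for $p$ a $C^N$ norm with $N$ large. Since $\|\chi e_\alpha\|_{\mc H}\le C$, and the series converges in $\mc D$ so that $\mc K(B)$ can be applied termwise, we get
\begin{equation*}
|\langle\mc K(B),\Phi\rangle|\le\sum_{\alpha,\beta}|c_{\alpha\beta}|\,\|B\|_{op}\,\|\chi e_\alpha\|\,\|\chi e_\beta\|\le C\|B\|_{op}\,p(\Phi).
\end{equation*}

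\emph{Step 3: an $\varepsilon/3$ argument.} Fix $\Phi$ supported in $K\times K$. Truncating the expansion from Step 2 gives $\Phi_N\in\mc D\otimes\mc D$ with $p(\Phi-\Phi_N)\to0$. Then
\begin{align*}
|\langle\mc K(A_n)-\mc K(A),\Phi\rangle|
&\le C\Big(\sup_n\|A_n\|_{op}+\|A\|_{op}\Big)\,p(\Phi-\Phi_N)\\
&\quad+|\langle\mc K(A_n)-\mc K(A),\Phi_N\rangle| .
\end{align*}
The first term is small for $N$ large, uniformly in $n$, by Step 2. The second term tends to $0$ as $n\to\infty$ by Step 1. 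Hence $\mc K(A_n)\to\mc K(A)$ in $\mc D'$.

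The main obstacle is Step 2, namely a quantitative density of tensors that is controlled by the operator norm. Choosing the Fourier-series expansion with a cutoff makes this concrete. The only point requiring care is the summability of the coefficients (rapid decay from smoothness), which guarantees that the series converges both in $\mc D$ and absolutely after applying the kernel.
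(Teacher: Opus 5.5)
Your proof is correct and follows the same route as the paper. Both arguments get convergence on elementary tensors from strong convergence, then extend to all of $\mc D((\R_+\times\bb T)^2)$ by density, using a bound on $\langle \mc K(A_n),\cdot\rangle$ that is uniform in $n$. The paper only asserts that uniform bound, whereas your cut-off Fourier expansion in Step 2 actually proves it. One small fix in Step 3: $\Phi-\Phi_N$ is supported in $\mathrm{supp}\,\chi\times\mathrm{supp}\,\chi$, not in $K\times K$. So either apply Step 2 with a compact containing $\mathrm{supp}\,\chi$, or bound the tail directly by $C\|B\|_{op}\sum_{\text{tail}}|c_{\alpha\beta}|$.
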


\begin{proof}
  For every $f,g \in D(\R_+\times \bb T)$, the defining identity
 ~\eqref{eq:swartzkernel} for the distributional kernel gives
  \begin{align*}
    |\langle \mc K(A_n) - \mc K(A), f \otimes g\rangle| & = | \langle
    (A_n-A)f,g\rangle_{\mc H}|                                        \\ &\leq \| (A_n - A) f \|_{\mc H} \|
    g \|_{\mc H}.
  \end{align*}
  Since $(A_n-A)f \rightarrow 0$ in $\mc H$, follows that $\mc \langle \mc K(A_n),f\otimes g\rangle \rightarrow \mc \langle \mc K(A),f\otimes g\rangle$.
  By linearity this extends to finite sums $F = \sum_l f_l \otimes g_l$.
  Moreover, such sums are dense in $\mc D((\R_+\times \bb T)^2)$, and for each $n$, $F \mapsto \langle \mc K(A_n), F \rangle$ is a linear functional bounded in terms of $\sup_{n} \|A_{n}\|_{op}$ uniformly.
  Now, a standard density argument gives $\langle \mc K(A_n), F\rangle \rightarrow \langle \mc K(A), F \rangle$ for every $F \in \mc D((\R_+\times \bb T)^2)$.

\end{proof}

Define $q(z) = Q(z,z)$ (which is pointwise well-defined) and, for $\eps,\delta >0$, $q_{\eps,\delta}(z) = Q_{\eps,\delta}(z,z)$.
Is it possible to show that, on compact time intervals, $q_{\eps,\delta} \rightarrow q$ uniformly as $\eps,\delta \downarrow 0$.
It will be convenient to perform the following split
\begin{equation}\label{eq:diagonalsplit}
  \Qbb_{\eps,\eps}(z,w)Q_{\eps,\delta}^{j-1}(z,w) =
  q_{\eps,\delta}(z)^{j-1}\Qbb_{\eps,\eps}(z,w) +
  \Qbb_{\eps,\eps}(z,w)(Q_{\eps,\delta}^{j-1}(z,w)-q_{\eps,\delta}(z)^{j-1}),
\end{equation}
which holds pointwise.
By Lemma~\ref{lem:char-Qs},
\begin{equation*}
  |\Qbb_{\eps,\eps}(z,w)(Q_{\eps,\delta}^{j-1}(z,w)-q_{\eps,\delta}(z)^{j-1})
  | \lesssim \| z - w \|_{s}^{-2},
\end{equation*}
which is locally integrable in parabolic dimension $3$.

Now, fix $\widehat{T} < \infty$ and $\eta \in C_{c}^{\infty}(\R_+)$ with $\eta \equiv 1$ on $[0,\widehat{T}]$.
By boundedness of $q^{j-1}$ on $[0,\widehat{T}]\times \bb T$, the operator $M = M_{\eta q^{j-1}}$ is a bounded self-adjoint operator, so $M\Qbb \eqdef \Qbb \circ M$ is bounded and the kernel operator $\mc K(M\Qbb) \in \mc D'((\R_+\times \bb T)^2)$ is well-defined by~\eqref{eq:swartzkernel}.

Define the distribution, on $\mc D(([0,\widehat{T}]\times \bb T)^2)$, recalling that $K$ is the heat-semigroup (not to be confused with $\mathcal{K}(B)$, which denotes the kernel of a bilinear operator $B$), we can define

\begin{equation}\label{eq:limitop}
  \Qbb Q^{j-1}
  =
  \mc K\left(M\Qbb\right)
  +
  L,
\end{equation}
where $L$ is a bilinear operator identified with the locally integrable function
\begin{align*}
  L(z,w) \eqdef
-\frac{1}{2} (\p_1K)(|t-s|,x-y))(Q^{j-1}(z,w)-q(z)^{j-1}).
\end{align*}
We notice that, for another choice of cut-off function such that $\eta' \equiv 1$ on $[0,\widehat{T}]$, we have $M_{\eta q^{j-1}}\Qbb$ and $M_{\eta' q^{j-1}}\Qbb$ agree as operator from $D(([0,\widehat{T}]\times \bb T)^2)$ to $\mc H$.
Hence,~\eqref{eq:limitop} is independent of the choice of $\eta$ when tested against functions supported in $[0,\widehat{T}]\times \bb T$.

For $j\geq2$, we  define
\begin{equation}\label{eq:limitingvar}
V^{(j)} = (j-1)!\left[ \Qbb Q^{j-1} + (j-1)\Qb^2Q^{j-2}\right].
\end{equation}

\begin{theorem}\label{teo:limitingvar}
For every $j\geq 2$, $V_{\eps,\delta}^{(j)} \rightarrow V^{(j)}$ in
$\mc D'(([0,\widehat{T}]\times \bb T)^2)$ as $\eps,\delta \downarrow 0$.
\end{theorem}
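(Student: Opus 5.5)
Starting from the diagrammatic formula \eqref{eq:cov-via-diags}, we handle its two terms separately. Fix test functions of product form $f\otimes g$ with $f,g$ supported in $[0,\widehat{T}]\times\bb T$; by linearity and the density argument used in Lemma~\ref{lem:kernelconv}, it suffices to prove convergence of $\langle V^{(j)}_{\eps,\delta}, F\rangle$ for such $F$. To extend to general $F$, we also need a uniform bound $|\langle V^{(j)}_{\eps,\delta},F\rangle|\lesssim \|F\|_{C^\ell}$, and we will obtain it along the way.

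\textbf{The second term.} We treat $(j-1)\Qb_{\eps,\delta}(z,w)\Qb_{\delta,\eps}(w,z)Q^{j-2}_{\eps,\delta}(z,w)$ by dominated convergence. Lemma~\ref{lem:char-Qs} gives the uniform bounds $|\Qb_{\eps,\delta}|\lesssim\|z-w\|_{\mf s}^{-1}$ and $|Q_{\eps,\delta}|\lesssim 1$, so the product is dominated by $\|z-w\|_{\mf s}^{-2}$, which is locally integrable in parabolic dimension $3$. It remains to check pointwise convergence off the diagonal, $\Qb_{\eps,\delta}\to\Qb$ and $Q_{\eps,\delta}\to Q$. This follows from the integral formulas in Lemma~\ref{lem:char-Qs}: the expressions $(K(s+\eps+\delta)-K(s+\delta))/\eps\to\p_1K(s)$ are smooth for $s>0$, and the tails in $s$ are controlled by the exponential decay of the nonzero Fourier modes together with the mean-zero constraint. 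Away from the diagonal, the limit of $\Qb$ is $\tfrac12\int_{|t|}^\infty \p_1 K = -\tfrac12 K(|t|,x)$ (zero mode removed). Here $\Qb^2$ in \eqref{eq:limitingvar} is understood as this locally integrable product $\Qb(z,w)\Qb(w,z)$.

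\textbf{The first term.} Here we use the split \eqref{eq:diagonalsplit}. The remainder $\Qbb_{\eps,\eps}(Q^{j-1}_{\eps,\delta}-q_{\eps,\delta}(z)^{j-1})$ is again dominated by $\|z-w\|_{\mf s}^{-2}$. For this we use Lemma~\ref{lem:char-Qs}: $|Q_{\eps,\delta}(z,w)-Q_{\eps,\delta}(z,z)|\lesssim\|z-w\|_{\mf s}$, together with $a^{j-1}-b^{j-1}=(a-b)\sum a^ib^{j-2-i}$ and the uniform boundedness of $Q$. Off the diagonal it converges pointwise to $L(z,w)$, since $\Qbb_{\eps,\eps}\to-\tfrac12\p_1K(|t|,x)$ there. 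So dominated convergence handles this piece too. For the singular piece $q_{\eps,\delta}(z)^{j-1}\Qbb_{\eps,\eps}(z,w)$ we use the operator framework. Tested against $f\otimes g$, it equals $\langle \Qbb_{\eps,\eps}(M_{\eta q_{\eps,\delta}^{j-1}}f),g\rangle_{\mc H}$, with the multiplier placed on the $z$-variable (matching the convention $M\Qbb=\Qbb\circ M$). We then write
\[
\Qbb_{\eps,\eps}M_{\eta q_{\eps,\delta}^{j-1}}-\Qbb M_{\eta q^{j-1}}=\Qbb_{\eps,\eps}\bigl(M_{\eta q_{\eps,\delta}^{j-1}}-M_{\eta q^{j-1}}\bigr)+(\Qbb_{\eps,\eps}-\Qbb)M_{\eta q^{j-1}}.
\]
The first operator has norm at most $\|\eta\|_\infty\|q^{j-1}_{\eps,\delta}-q^{j-1}\|_{L^\infty([0,\widehat T]\times\bb T)}\to0$, using $\|\Qbb_{\eps,\eps}\|_{op}\le1$ from Theorem~\ref{tep:kernelconv} and the uniform convergence $q_{\eps,\delta}\to q$. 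The second tends to zero strongly by Theorem~\ref{tep:kernelconv}. Hence the composite operators converge strongly with uniformly bounded norms, and Lemma~\ref{lem:kernelconv} gives convergence of their Schwartz kernels in $\mc D'$ to $\mc K(M\Qbb)$.

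\textbf{Main obstacle.} The delicate step is the pointwise off-diagonal identification of the limits of $\Qbb_{\eps,\eps}$ and $\Qb_{\eps,\delta}$, uniformly enough for dominated convergence, together with the claimed uniform convergence $q_{\eps,\delta}\to q$. I would prove the latter directly from the Fourier series $q_{\eps,\delta}=\sum_{k\ne0}e^{-\lambda_k(\eps+\delta)}/(2\lambda_k)$, which converges uniformly since $\sum_{k\neq0}\lambda_k^{-1}<\infty$. For the former, I would work on $\|z-w\|_{\mf s}\ge r$ using smoothness of $K$ at positive times. Near the diagonal, the dominating bounds of Lemma~\ref{lem:char-Qs} suffice, since the function $L$ and the $\Qb^2$ term are integrable there.

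Finally, summing the contributions with the combinatorial factor $(j-1)!$ yields exactly \eqref{eq:limitingvar}. The uniform-in-$(\eps,\delta)$ bounds obtained above (operator norm $\le 1$ for the singular piece, integrable domination for the rest) supply the equicontinuity needed to pass from tensor test functions to all of $\mc D(([0,\widehat{T}]\times\bb T)^2)$.
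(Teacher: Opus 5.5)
Your proposal is correct and follows essentially the same route as the paper. You use the split \eqref{eq:diagonalsplit}, then operator-norm convergence of the multipliers $M_{\eta q_{\eps,\delta}^{j-1}}$ together with strong convergence of $\Qbb$ and Lemma~\ref{lem:kernelconv} for the singular piece, and finally dominated convergence with the $\|z-w\|_{\mf s}^{-2}$ bound for the remainder and the mixed term. The one point to tidy: \eqref{eq:cov-via-diags} contains $\Qbb_{\eps,\delta}$ rather than $\Qbb_{\eps,\eps}$, so you need strong convergence $\Qbb_{\eps,\delta}\to\Qbb$ as $\eps,\delta\downarrow 0$ with $\eps \neq \delta$. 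This follows by the same Fourier-multiplier argument as Theorem~\ref{tep:kernelconv}, and the paper's proof also invokes it.
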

\begin{proof}
It is sufficient to argue restricting to the case of test functions supported in $[0,\widehat{T}]\times \bb T$ for an arbitrary $\widehat{T}$.
Fix $\eta$ as in definition of~\eqref{eq:limitingvar}.
Since $q_{\eps,\delta} \rightarrow q$ uniformly on $[0,\widehat{T}]\times \bb T$ and $x \mapsto x^{j-1}$ is Lipschitz for $j \geq 2$, $q_{\eps,\delta}^{j-1} \rightarrow q^{j-1}$ uniformly as well.
By~\eqref{eq:multiplicationbound},

\begin{equation*}
  \| M_{\eta q_{\eps,\delta}^{j-1}} - M_{\eta q^{j-1}} \|_{op} \leq
  \| \eta \|_{\infty} \|
  q_{\eps,\delta}^{j-1}-q^{j-1}\|_{L^{\infty}([0,\widehat{T}]\times
  \bb T)} \rightarrow 0,
\end{equation*}
which means that $M_{\eta q_{\eps,\delta}^{j-1}} \rightarrow M_{\eta q^{j-1}}$ in operator norm.
By Theorem~\ref{tep:kernelconv}, $\Qbb_{\eps,\delta} \rightarrow \Qbb$ strongly, satisfying $\sup_{\eps,\delta}\|\Qbb_{\eps,\delta}\|_{op} \leq 1$.
Then, $M_{\eta q_{\eps,\delta}^{j-1}} \Qbb_{\eps,\delta} \rightarrow M_{\eta q^{j-1}} \Qbb$ strongly on $\mc H$.
Moreover, $\sup_{\eps,\delta}\|M_{\eta q_{\eps,\delta}^{j-1}} \Qbb_{\eps,\delta}\|_{op} \leq \sup_{\eps,\delta}\|\eta q_{\eps,\delta}^{j-1}\|_{\infty} < \infty$ by~\ref{lem:char-Qs}.
Using Lemma~\ref{lem:kernelconv}, we get

\begin{equation*}
  \mc K(M_{\eta q_{\eps,\delta}^{j-1}} \Qbb_{\eps,\delta})
  \rightarrow \mathcal{K}\left(M \Qbb\right) \quad \text{in } \mc
  D'((\R_+ \times \bb T)^2).
\end{equation*}
On $[0,\widehat{T}]\times\bb T$, $\eta\equiv1$, so  $\mc
K\left(M_{q_{\eps,\delta}^{j-1}}\Qbb_{\eps,\delta}\right)$ coincides
with $\mc K(M_{\eta q_{\eps,\delta}^{j-1}}\Qbb_{\eps,\delta})$.
By Lemma~\ref{lem:char-Qs}, for every $z\neq w$,
\begin{equation*}
  \Qbb_{\eps,\delta}(z,w)(Q_{\eps,\delta}^{j-1}(z,w)-q_{\eps,\delta}^{j-1}(z))
  \rightarrow
-\frac{1}{2}\p_1K(|t-s|,x-y)(Q^{j-1}(z,w)-q(z)^{j-1})
\end{equation*}
pointwise as $\eps,\delta \downarrow 0$.
Therefore, the dominated convergence theorem gives that the second term of~\eqref{eq:diagonalsplit} converges to the second term of~\eqref{eq:limitop} in $\mc D'((\R_+\times \bb T)^2)$.
The same argument holds for the mixed term, and we may conclude the theorem.
\end{proof}

Fix $z \in K \Subset \R_+ \times \bb T$.
For any function $f \in \mf B_\infty$, $\eps,\delta>0$ and $0< \lambda \leq 1$, we have that
\begin{align*}
\E\left[\langle \chi_{\eps,\delta}^{(j)},\mc S_{z}^{\lambda}f
\rangle^2 \right] & = \langle V_{\eps,\delta}^{(j)}, S_{z}^{\lambda}f
\otimes S_{z}^{\lambda}f \rangle                                                      \\
& \lesssim \langle
q_{\eps,\delta}^{j-1}\Qbb_{\eps,\delta}S_{z}^{\lambda}f,
S_{z}^{\lambda}f \rangle_{\mc H} + \lambda^{-2}                                       \\
& \leq \|q_{\eps,\delta}^{j-1}\|_{L^{\infty}([0,\widehat{T}] \times
\bb T)} \|\mc S_{z}^{\lambda}f\|_{\mc H}^{2} + \lambda^{-2}
\lesssim \lambda^{-3}+\lambda^{-2} \lesssim \lambda^{-3},
\end{align*}
where $\widehat{T} = \sup \mathrm{supp} f$. Also, for every smooth
compactly supported test function $f$ on $K$
\begin{equation*}
\E\left[|\langle \chi_{\eps}^{(j)},f \rangle - \langle
\chi_{\delta}^{(j)},f \rangle|^2 \right] = \langle
V_{\eps,\eps}^{(j)}, f \otimes f \rangle + \langle
V_{\delta,\delta}^{(j)}, f \otimes f \rangle - 2 \langle
V_{\eps,\delta}^{(j)}, f \otimes f \rangle.
\end{equation*}
By Theorem~\ref{teo:limitingvar}, all three terms converge, as
$\eps,\delta \downarrow0$, to the same limit. Hence, $(\langle
\chi_{\eps}^{(j)},f \rangle)_{0 <\eps \leq 1}$ is Cauchy in $L^2$.

From this, we can use standard versions of the Kolmogorov continuity theorem to conclude that $\chi^{(j)}_{\eps} \longrightarrow \chi^{(j)}$ in $\mathcal{C}^{-3/2-\kappa}$ for all $\kappa >0$, see e.g.~\cite[Theorem 2.7]{chandra2017stochastic}.

It remains to show that the field $\chi^{(j)}$ has finite moments and it is not Gaussian.
However, in order to show non-Gaussianity, we need to explore the higher moments of $\chi^{(j)}$.

We already know that, for all $j \ge 2$,  the limiting field $\chi^{(j)}$ is well-defined almost surely and that it is non-trivial (due to its non-trivial covariance function).
Moreover, due to the Wick renormalisation, we have that $\mathbb{E}[ \langle \chi^{(j)}, f \rangle ]=0$ for every test function $f$.

To conclude that the field is non-Gaussian, we simply show that there exists $f$ such that the fourth cumulant is non-zero, that is
\begin{align*}
\mathbb{E} [ \langle \chi^{(j)},f\rangle^4]
-
3
\mathbb{E} [ \langle \chi^{(j)},f\rangle^2]^2
\neq
0.
\end{align*}

In fact, by~\eqref{eq:momentsfield}, we can see that for each $\eps,\delta>0$, we have
\begin{align*}
\mathbb{E} [ \langle \chi^{(j)}_{\eps},f\rangle^4]
-
3
\mathbb{E} [ \langle \chi^{(j)}_{\eps},f\rangle^2]^2
=
\int \prod_{k=1}^4 f_k(z_k)
\sum_{L\in\mathfrak L_{4,c}^{(j)}}
\prod_{\{v,w\}\in L}
\mathrm{K}_{\eps,\eps}^{vw}(\Vec{z}_{p})
\,d\Vec{z}_{p}.
\end{align*}
Here, $\mathfrak L_{p,c}^{(j)}$ is the  set of admissible matchings for which the associated multi-graph is connected.
For each edge $e$  in $\mathfrak L_{4,c}^{(j)}$, we set
\begin{equation}\label{eq:graph-weights}
(a_e,r_e) \eqdef
\begin{cases}
  (3,-1), & \text{ if $e$ is red,}    \\
  (0,0),  & \text{ if $e$ is black,}  \\
  (-1,0), & \text{ if $e$ is purple}.
\end{cases}
\end{equation}
At this point, we use the Hairer-Quastel bounds~\cite[Theorem A.4]{hairer2018class} to allow us to take $\eps \to 0$.\footnote{This is the only moment of the proof in which the starting from this initial condition is truly relevant.
This is because the Hairer-Quastel bounds are only currently proved for kernels which are translation-invariant.}
We notice that, using Lemma~\ref{lem:char-Qs}, the kernel $\Qbb$ is seemingly non-integrable; however,
using that $\int \Qbb(f)(x) dx= 0$, which is why we can set $r_e=-1$ for the red edges.
We can then easily see that any such graph satisfies conditions A.1 - A.4 of~\cite{hairer2018class}.
Hence, we have the non-trivial finite limit
\begin{align*}
\mathbb{E} [ \langle \chi^{(j)},f\rangle^4]
-
3
\mathbb{E} [ \langle \chi^{(j)},f\rangle^2]^2
=
\int \prod_{k=1}^4 f_k(z_k)
\sum_{L\in\mathfrak L_{4,c}^{(j)}}
\prod_{\{v,w\}\in L}
\mathrm{K}^{vw}(\Vec{z}_{p})
\,d\Vec{z}_{p} \in \mathbb{R},
\end{align*}
where $K^{vw}$ is defined as the limit of $K^{vw}_{\eps,\eps}$.

Finally, the proof of the finiteness of the higher moments of $\chi^{(j)}$ follow from a similar Hairer-Quastel bound using the graph-weights~\eqref{eq:graph-weights}.

\section{Proof of Theorem~\ref{thm:equation-solution}}\label{sec:proof-of-theorem-thm-eq-solution}
As the following proof follows from Theorem~\ref{thm:convergence-field-d-1} and standard techniques, we will merely provide a sketch with its main steps.
The proof follows from two simple parts, first we show that RHS terms of~\eqref{eq:intro-eps-remainder} converge in probability (in their respective topologies) to the RHS elements of~\eqref{eq:error}.
Then, we show that~\eqref{eq:error} has a unique local solution by showing that it is the fixed point of a map from $C^{1/2-\kappa}_{\mf s}$ to itself, for all $\kappa>0$.

We use that $\chi^{(1)}_\eps = \RS{D}_\eps \to \RS{D}$ in $\mathcal{C}^{-3/2-\kappa}$ and  Theorem~\ref{sec:proof-of-theorem-thm-d-1}, to get that for all $j \ge 1$, we have 
\begin{align*}
\eps^{-1}:(\RS{i}_\eps^{j}-\RS{i}^j):
=
\sum_{\ell=0}^{j-1}
:\RS{D}_{\eps} \RS{i}_{\eps}^\ell \RS{i}^{j-1-\ell}:
\longrightarrow
j \chi^{(j)} \text{ in } \mathcal{C}^{-3/2-\kappa}_{\mf s}.
\end{align*}
Whilst Theorem~\ref{thm:convergence-field-d-1} only showed the convergence of $:\RS{D}_{\eps} \RS{i}_{\eps}^{j-1}:$, the proof follows almost unchanged for the sum over $\ell=1,\dots,j-1$.
Using the above, that $u_{\eps}$ converges to $u$ in $\mathcal{C}^{5/2-\kappa}_{\mf s}$, and Theorem~\ref{thm:classical-product}, we have that
\begin{align*}
-\sum_{m=0}^k
a_m
\sum_{j=0}^m
\binom{m}{j}\left( \eps^{-1}(\RS{i}_\eps^{j}-\RS{i}^j)\cdot u_{\eps}^{m-j} \right)
\longrightarrow
-\sum_{m=0}^k
a_m
\sum_{j=1}^m
ju^{m-j}  \binom{m}{j} \chi^{(j)}
\text{ in } \mathcal{C}^{-3/2-\kappa}.
\end{align*}

Likewise,
\begin{align*}
-
\tilde{u}
\sum_{m=0}^k
a_m
\sum_{j=0}^m
\binom{m}{j}
\RS{i}^j \sum_{\ell=0}^{m-j-1} u^{\ell}_\eps u^{m-j-1-\ell}
\longrightarrow
-
\tilde{u}
\sum_{m=0}^k
a_m
\sum_{j=0}^m
\binom{m}{j}
(m-j)
\RS{i}^j u^{m-j-1}
=
P'(\RS{i}+u)
=
P'(\Phi).
\end{align*}

Using that $u \in  \mathcal{C}^{5/2-\kappa}_{\mf s}, \RS{i}\in \mathcal{C}^{1/2-\kappa}_{\mf s}$ $a.s$, we can use
Theorem~\ref{thm:convergence-field-d-1}, we have that for $k = \text{deg}(P) \ge 2$, we have that for all $j \le k$, we have that the event
\begin{align*}
\mathcal{E}_P
\eqdef
\left\{
  \|P(\Phi)\|_{\mathcal{C}^{1/2-\kappa}_{\mf s}} <\infty,
  \|P(\RS{i})\|_{\mathcal{C}^{1/2-\kappa}_{\mf s}} <\infty,
  \|u\|_{\mathcal{C}^{5/2-\kappa}_{\mf s}} < \infty,
\right\}
\cap
\left\{
  \|\chi^{(j)}\|_{\mathcal{C}^{-3/2-\kappa}_{\mf s}} < \infty,
  \forall j \le k
\right\}
\end{align*}
has probability $1$.

Observe that, the regularities $(5/2 - \kappa) + (- 3/2 - \kappa) > 0$, we can use Theorem~\ref{thm:classical-product}  to show that there exists constants $C,C_P>0$, such that   in $\mathcal{E}_P$
\begin{align*}
\left\| \sum_{j =1}^{k} \chi^{(j)} \cdot \frac{1}{(j-1)!}P^{(j)}(u)\right\|_{\mathcal{C}^{-3/2-\kappa}_{\mf s}}
\le
C
\sum_{j =1}^{k}
\| \chi^{(j)}\|_{\mathcal{C}^{-3/2-\kappa}_{\mf s}} (\|u\|_{\mathcal{C}^{5/2-\kappa}_{\mf s}} + C_P)^k
<
\infty.
\end{align*}
Likewise, for any $X \in \mathcal{C}^{1/2-\kappa}_{\mf s}$, $-X \cdot (P(\Phi)-P(\RS{i})) \in \mathcal{C}^{1/2-\kappa}_{\mf s}$.
Recalling the notation of the operator $M_{g}$ which extends the pointwise multiplication operator.

Setting $\mathcal{I}$ for $X \in \mathcal{C}^{\alpha}_{\mf s}$, as
\begin{align*}
\mathcal{I}(X)(t,x) \eqdef \int_{0}^t \int_{\mathbb{T}}  K_{t-s}(x-y)  X(s,y ) ds.
\end{align*}
Standard Schauder estimates imply that $\mathcal{I}: \mathcal{C}^{\alpha}_{\mf s} \to  \mathcal{C}^{\alpha+2}_{\mf s}$ is continuous.

It is then easy to see that the map, for any $g \in \mathcal{C}^{1/2-\kappa}_{\mf s}$ and $f \in \mathcal{C}^{-3/2-\kappa}_{\mf s}$, the map
\begin{align*}
\Psi_{f,g}:
X \mapsto
\mathcal{I}  \left(
- M_{g}(X)
  + f
\right)
\end{align*}
sends $\mathcal{C}^{1/2-\kappa}_{\mf s}([0,T] \times \mathbb{T})$ to itself.
Moreover, the map is continuous in $f$ and $g$.

Now, for $\gamma >0$,  we set the random time
\begin{align*}
T_\gamma
\eqdef
\sup \left\{ t \ge 0
  :
  \left\| \sum_{j =1}^{k} \chi^{(j)} \cdot \frac{1}{(j-1)!}P^{(j)}(u)\right\|_{\mathcal{C}^{-3/2-\kappa}_{\mf s}} <\gamma,
  \|P(\Phi)-P(\RS{i})\|_{\mathcal{C}^{1/2-\kappa}_{\mf s}}<\gamma
\right\}.
\end{align*}
Due to $\mathcal{E}_P$, such time is positive $a.s$ for any $\gamma >0$.
By setting $\gamma$ sufficiently small, we have that $\Psi_{f,g}$ is a contraction with $f = - \sum_{j =1}^{k} \chi^{(j)} \cdot \frac{1}{(j-1)!}P^{(j)}(u)$ and $g= P'(\Phi)$.
Using the uniqueness of the fixed point as well as the continuity with respect to the parameters $f,g$, we conclude the result.

\subsection*{Acknowledgements}
The authors would like to thank
Giuseppe Cannizzaro,
Luis Cardoso,
Tommaso Rosati,
Pedro Soares,
Ellen Powell, and
Nikos Zygouras
for valuable discussions at the early stages of this project.

\subsection*{Funding}
E.B.\ thanks CAPES for a PhD scholarship.
The research of LC during this project was supported by the São Paulo
Research Foundation (FAPESP) --  Process Number \#2026/10177-2.
M. J. acknowledge CNPq for its support through the Grant Produtividade em Pesquisa 312146/2021--3 and the Grant Universal 408529/2025--3. M. J. acknowledges FAPERJ for its support through a CNE Grant.
\bibliographystyle{alpha}
\bibliography{bibliografia}

@book{hormander2003analysis,
  title={The analysis of linear partial differential operators I: Distribution Theory and Fourier Analysis},
  author={H{\"o}rmander, Lars},
  year={2003},
  publisher={Springer Science \& Business Media}
}

@article{hairer2024renormalisation,
  title={Renormalisation in the presence of variance blowup},
  author={Hairer, Martin},
  journal={The Annals of Probability},
  volume={53},
  number={5},
  pages={1958--1985},
  year={2025},
  publisher={Institute of Mathematical Statistics}
}

@article{gerencser2025weak,
  title={Weak coupling limit of {KPZ} with rougher than white noise},
  author={Gerencs{\'e}r, M{\'a}t{\'e} and Toninelli, Fabio},
  journal={Electronic Communications in Probability},
  volume={30},
  pages={1--11},
  year={2025},
  publisher={The Institute of Mathematical Statistics and the Bernoulli Society}
}

@article{chiarini2023fractional,
  title={Fractional {E}dgeworth expansions for one-dimensional heavy-tailed random variables and applications},
  author={Chiarini, Leandro and Jara, Milton and Ruszel, Wioletta M},
  journal={Electronic Journal of Probability},
  volume={28},
  pages={1--42},
  year={2023},
  publisher={The Institute of Mathematical Statistics and the Bernoulli Society}
}

@article{hairer2014theory,
  title={A theory of regularity structures},
  author={Hairer, Martin},
  journal={Inventiones mathematicae},
  volume={198},
  number={2},
  pages={269--504},
  year={2014},
  publisher={Springer}
}

@inproceedings{chandra2017stochastic,
  title={Stochastic {PDE}s, regularity structures, and interacting particle systems},
  author={Chandra, Ajay and Weber, Hendrik},
  booktitle={Annales de la Facult{\'e} des sciences de Toulouse: Math{\'e}matiques},
  volume={26},
  pages={847--909},
  year={2017}
}

@article{gess2025higher,
  title={Higher order fluctuation expansions for nonlinear stochastic heat equations in singular limits},
  author={Gess, Benjamin and Wu, Zhengyan and Zhang, Rangrang},
  journal={Stochastic Processes and their Applications},
  pages={104847},
  year={2025},
  publisher={Elsevier}
}

@book{E,
	address = "Providence, RI",
	author = "Evans, Lawrence C.",
	isbn = "0-8218-0772-2",
	mrclass = "35-01",
	mrnumber = "MR1625845 (99e:35001)",
	mrreviewer = "Luigi Rodino",
	pages = "xviii+662",
	publisher = "American Mathematical Society",
	series = "{Graduate Studies in Mathematics}",
	title = "{Partial differential equations}",
	volume = "19",
	year = "1998"
}

@inproceedings{hairer2018class,
  title={A class of growth models rescaling to KPZ},
  author={Hairer, Martin and Quastel, Jeremy},
  booktitle={Forum of Mathematics, Pi},
  volume={6},
  pages={e3},
  year={2018},
  organization={Cambridge University Press}
}

@article{gerencsér2026sharprateprobabilisticallystrong,
      title={The Sharp Rate of Probabilistically Strong Convergence to the KPZ Equation}, 
      author={Máté Gerencsér and Yueh-Sheng Hsu and Rhys Steele},
      year={2026},
      eprint={2609.02755},
      archivePrefix={arXiv},
      primaryClass={math.PR},
      url={https://arxiv.org/abs/2609.02755}, 
}
\end{document}